
\documentclass[journal]{IEEEtran}
%

\usepackage{amssymb,amsmath,amsthm,amsfonts}
\usepackage{color}
\usepackage{epstopdf}
\usepackage{graphicx}
\usepackage{subfigure}

\def\dref#1{(\ref{#1})}
\def\disp{\displaystyle}
\newtheorem{remark}{Remark}[section]
\newtheorem{theorem}{Theorem}[section]                   
\newtheorem{definition}{Definition}[section]                   

\newtheorem{lemma}{Lemma}[section]


%

%

%
\ifCLASSINFOpdf
\else
\fi
\hyphenation{op-tical net-works semi-conduc-tor}

\begin{document}

\title{Linear and Nonlinear Event-Triggered Extended State Observers for Uncertain Stochastic Systems}

\author{Ze-Hao Wu, Feiqi Deng, Hua-Cheng Zhou, and  Zhi-Liang Zhao
\thanks{Ze-Hao Wu is with School of Mathematics and Big Data, Foshan University, Foshan 528000, China.
        {\tt\small  Email: zehaowu@amss.ac.cn} }
         \thanks{Feiqi Deng  is with Systems Engineering Institute, South China University of Technology, Guangzhou 510640, China.
 {\tt\small  Email: aufqdeng@scut.edu.cn} }
 \thanks{Hua-Cheng Zhou  is with School of Mathematics and Statistics, Central South University, Changsha 410075, China.
 {\tt\small  Email: hczhou@amss.ac.cn} }
 \thanks{Zhi-Liang Zhao  is with School of Mathematics and Information Science, Shaanxi Normal University, Xi'an 710119, China.
 {\tt\small  Email: gsdxzzl@mail.ustc.edu.cn} }
}%

%
%

\markboth{}%
{Shell \MakeLowercase{\textit{et al.}}: Bare Demo of IEEEtran.cls
for Journals}
%



\maketitle

\begin{abstract}
In this paper, linear and nonlinear event-triggered extended state observers are designed for
a class of uncertain stochastic systems driven by bounded and colored noises. Two
 event-generators with an ensured positive minimum inter-event time for
every sample path solution of the stochastic systems, are proposed for the designs of
linear and nonlinear  event-triggered extended state observers, respectively.
 The mean square and almost sure convergence of the estimation errors of
unmeasured state and stochastic total disturbance including internal uncertainty and external stochastic noises is
presented with rigorous theoretical proofs. Compared with the linear event-triggered extended state observer, the theoretical results show that the nonlinear one
via homogeneity possesses higher estimation accuracy but is at the price of higher triggering frequency. Some numerical simulations are
performed to authenticate the theoretical results.
\end{abstract}

\begin{IEEEkeywords}
Stochastic systems, extended state observer, event-triggering mechanism, estimation, colored noise.
\end{IEEEkeywords}

%
\IEEEpeerreviewmaketitle

\section{Introduction}
%
%
%
%

\IEEEPARstart{O}wing to the ubiquity of uncertainties and disturbances in practical engineering systems,
disturbance rejection for uncertain systems has been one of mainstream topics in recent decades
 in the control community.
Active disturbance rejection control (ADRC) \cite{Han}, a highly novel  control technology
based on estimation/compensation strategy, can cope with  uncertainties and disturbances in large scale. The
significance of ADRC has been validated by considerable number of engineering applications.
The key component part of ADRC is the extended state observer (ESO) designed to be linear or nonlinear, aiming at
real-time estimation of not only unmeasured state but also total disturbance (extended state) affecting system performance;
Based on the estimates obtained via ESO, an active anti-disturbance control constructed by a feedback control and a compensator can be
designed for disturbance rejection and prospective control objective. Therefore, the theoretical foundation for the convergence of ESO is extremely important
 in ADRC's theory and applications.

The convergence of linear and common nonlinear ESOs for uncertain systems has been investigated in the past two decades,
see \cite{zhaoESO2011,zhaofrac}
  and the references therein. The convergence of linear ESO and
nonlinear ESOs based on an exponentially stable system and finite-time stable one
for uncertain stochastic systems have also been researched in \cite{wu1,wuMIMOESO}.
It should be noted that most of these works concerning the design and theoretical analysis of ESO
are based on continuous-time  output  measurement.
However, in prevailing networked control systems or digital control systems, one of the crucial  problems
 is to reduce the utilization of communication/computation  resources.  This greatly spurs the current research of
event-triggering mechanism (ETM) in the estimation and control of systems, primarily on account of
its advantage in saving communication/computation resources
(see, e.g., \cite{eventESO,liuy2019,deng2019,liuy2020}
 and references therein).

The convergence of a nonlinear event-triggered ESO for a class of uncertain systems has been developed in \cite{eventESO},
where the nonlinear gain functions are constructed by an exponentially stable system and cover the linear ones as a special case.
Nevertheless, it is admittedly more realistic that uncertainties and disturbances are often stochastic in practice.
One of the momentous problems for the ETM to be practical and feasible is to avoid the Zeno phenomenon (i.e.,
the triggering conditions are satisfied infinite times in finite time), which would bring challenging obstacle in developing the
 event-triggered ESO for uncertain stochastic systems.
This is because when there exist stochastic noises, each sample path of system state
may change variously even for the deterministic initial condition;
Specifically, the execution/sampling times and
the inter-execution times depending on sample path, are stochastic but not deterministic, so that
obtaining a positive lower bound for the stochastic inter-execution times (i.e., excluding Zeno phenomenon) is very sophisticated or even impossible.
Recently, novel ETM  with dwell time (time-regularization) and periodic ETM have been proposed for stochastic systems,
which force the ETM to allow a positive minimum inter-event time, see, e.g., \cite{liuy2019,deng2019,liuy2020}. The Zeno phenomenon can
be directly excluded in these design frameworks, while the theoretical analysis becomes much more difficult.

On the other hand, compared with the nonlinear ESO in \cite{eventESO} that is an almost linear one,
a more widely used nonlinear ESO based on a finite-time stable system via homogeneity has been shown to be with higher estimation accuracy
and better noise-tolerant performance \cite{zhaofrac,wuMIMOESO,zhaonlinlieaTD}; The convergence of the nonlinear event-triggered ESO via homogeneity for
uncertain systems is still unsolved, where the stochastic counterpart is more general and complex.

Motivated by aforementioned research status, in this paper we develop both the linear and  nonlinear event-triggered ESOs
for a class of uncertain stochastic systems driven by bounded and colored noises. The main
contributions and novelties can be summed up as follows: a) The uncertain stochastic systems
are subject to large-scale stochastic total disturbance which is the
total nonlinear coupling effects of unmodeled dynamics, external deterministic disturbance, bounded noise and colored noise;
b) Novel linear/nonlinear event-triggered ESOs are designed for the uncertain stochastic systems, and the mean square and almost sure convergence in the transient  process
is presented with rigorous theoretical proofs; c) The theoretical results reveal that the nonlinear event-triggered ESO via homogeneity
is with higher estimation accuracy but higher triggering frequency compared with the linear one.

This paper will be proceed as below. The problem formulation and some preliminaries are presented in Section
II. The designs of linear and nonlinear event-triggered ESOs and convergence results are given
in Section III. Some numerical
simulations are performed to validate the rationality of the
theoretical results in Section IV, with the
concluding remarks be followed up in Section V. For the
sake of readability, theoretical proofs are arranged in
Appendix A and Appendix B.

\section{Problem formulation and preliminaries}\label{Se2}

The following notations are used throughout the paper.
$\mathbb{E}$  denotes the mathematical expectation;
$|Z|$ represents the absolute value of a scalar $Z$, and
$\|Z\|$ represents the Euclidean norm of a vector $Z$;
$I_{n}$
denotes the $n$-dimensional identity matrix;
$\lambda_{\min}(Z)$  and  $\lambda_{\max}(Z)$ represent
the minimum eigenvalue and maximum  eigenvalue of a positive definite matrix $Z$;
$a\wedge b:=\min\{a,b\}$;
$\langle\vartheta\rangle^{s}:={\rm sign}(\vartheta)|\vartheta|^{s}$
for  all $\vartheta\in\mathbb{R}$ and any $s>0$; $\mathbb{I}_{\Omega_{0}}$ denotes an indicator function
with the function value being $1$ in the domain $\Omega_{0}$ and being $0$ otherwise;
$0_{s}$ represent the $1\times s$ row vector with all components to be zero.

To deal with the convergence of the nonlinear ESO constructed via homogeneity,
the definitions of  homogeneity
and some preliminary lemmas are introduced as follows.

 \begin{definition}(\cite{Perruq})\label{def1}
 The zero equilibrium of the following system
\begin{equation}\label{system1}
\dot{\vartheta}(t)=\psi(\vartheta(t)),\ \vartheta(0)=\vartheta_0\in \mathbb{R}^s
\end{equation}
with $\psi\in C(\mathbb{R}^{s};\mathbb{R}^{s})$ and $\psi(0_{s})=0_{s}$,
is said to be globally  finite-time stable, if the zero equilibrium of
system \dref{system1} is Lyapunov stable and for any
 $\vartheta_0\in\mathbb{R}^s$, there exists $T(\vartheta_0)>0$ such that the solution of
\dref{system1} satisfies that $\lim_{t\rightarrow T(\vartheta_0)}\vartheta(t)=0_{s}$  and
$\vartheta(t)=0_{s}$ for all $ t\in[T(\vartheta_0),\infty)$.
\end{definition}

\begin{definition}(\cite{Rosier})
 A function $W:\mathbb{R}^s\to\mathbb{R}$ is said to be homogeneous of degree $\theta$ with respect to weights
 $\{w_l>0\}_{l=1}^s$, if $W(\lambda^{w_1}\vartheta_1,\lambda^{w_2}\vartheta_2,\cdots,\lambda^{w_s}\vartheta_s)=\lambda^\theta W(\vartheta_1,\vartheta_2,\cdots,\vartheta_s)$
 for all $\lambda>0$ and all $(\vartheta_1,\cdots,\vartheta_s)\in \mathbb{R}^{s}$.
A vector field $W:\mathbb{R}^s\to\mathbb{R}^s$ is said to be homogeneous of degree $\theta$ with respect to weights
 $\{w_l>0\}_{l=1}^s$,   if for all $l=1,\cdots,s$, the $l$-th
 component  $W_l$ is a homogeneous function of degree $\theta+w_{l}$, that is,
$W_l(\lambda^{w_1}\vartheta_1,\lambda^{w_2}\vartheta_2,\cdots,\lambda^{w_s}\vartheta_s)=\lambda^{\theta+w_l}W_l(\vartheta_1,\vartheta_2,\cdots,\vartheta_s)$
 for all $\lambda>0$ and all  $(\vartheta_1,\cdots,\vartheta_s)\in \mathbb{R}^s$. The system \dref{system1} is homogeneous of degree $\theta$ if the vector
field $\psi$ is homogeneous of degree $\theta$.
  \end{definition}

 \begin{lemma}(\cite[Theorem 2]{Rosier} or \cite[Theorem 6.2]{Bhat2}) \label{fdf}
If system \dref{system1} is homogeneous of degree $\theta$ with
weights $\{w_{l}\}_{l=1}^{s}$,
and its zero equilibrium is globally asymptotically
stable, then for any $\mu>\max_{1\leq l\leq s}\{-\theta,w_l\}$,
 there exists a positive definite,
radially unbounded function $W\in C^{1}(\mathbb{R}^{s};\mathbb{R})\cap C^{\infty}(\mathbb{R}^{s}\setminus\{ 0_{s}\};\mathbb{R})\ $
 such that $W$ is homogeneous of degree $\mu$ with respect to weights $\{w_{l}\}_{l=1}^{s}$,
and the Lie derivative of $W(\vartheta)$ along the vector field $\psi$: $L_{\psi}W(\vartheta) :=
\nabla W(\vartheta)\psi(\vartheta)$ is negative definite.
\end{lemma}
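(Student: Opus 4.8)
\emph{Proof plan.} I would follow the classical homogenization argument underlying \cite{Rosier} and \cite{Bhat2}. First, since $\psi\in C(\mathbb{R}^{s};\mathbb{R}^{s})$ makes the origin of \dref{system1} globally asymptotically stable, a converse Lyapunov theorem for (possibly non-Lipschitz) continuous vector fields, due to Kurzweil, provides a function $V\in C^{\infty}(\mathbb{R}^{s};\mathbb{R})$ that is positive definite, radially unbounded, and satisfies $L_{\psi}V(\vartheta)<0$ for every $\vartheta\neq 0_{s}$. This $V$ is not homogeneous, so the core step is to average it over the dilation. Writing $\delta_{\lambda}\vartheta:=(\lambda^{w_{1}}\vartheta_{1},\ldots,\lambda^{w_{s}}\vartheta_{s})$ for $\lambda>0$, I would fix $a\in C^{\infty}(\mathbb{R};[0,\infty))$ with $a(r)=0$ for $r\leq 1$, $a'(r)>0$ for $r>1$, and $\sup_{r}a(r)<\infty$, and define
\[
W(\vartheta):=\int_{0}^{\infty}\lambda^{-\mu-1}\,a\big(V(\delta_{\lambda}\vartheta)\big)\,d\lambda .
\]
The change of variable $\eta=\varrho\lambda$ gives at once $W(\delta_{\varrho}\vartheta)=\varrho^{\mu}W(\vartheta)$, so $W$ is homogeneous of degree $\mu$.

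Second, I would verify that $W$ is well defined and has the stated regularity. The integrand vanishes for $\lambda$ near $0$ because $V(\delta_{\lambda}\vartheta)\to V(0_{s})=0\leq 1$, and is dominated near $\lambda=\infty$ by $(\sup a)\,\lambda^{-\mu-1}$, which is integrable since $\mu>\max_{l}w_{l}>0$. For $\vartheta\neq 0_{s}$, radial unboundedness of $V$ forces $V(\delta_{\lambda}\vartheta)\to\infty$, hence $a(V(\delta_{\lambda}\vartheta))>0$ on a set of positive Lebesgue measure, so $W(\vartheta)>0$; together with $W(0_{s})=0$, continuity and homogeneity of $W$, restriction to the homogeneous unit sphere then yields positive definiteness and radial unboundedness. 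Smoothness away from the origin follows by differentiating under the integral sign (with domination locally uniform in $\vartheta$ near $0$ and near $\infty$), and at the origin the point is that $W$ is homogeneous of degree $\mu$ with first-order partials homogeneous of degree $\mu-w_{l}>0$: the estimate $W(\vartheta)=o(\|\vartheta\|)$ as $\vartheta\to 0_{s}$ and continuity of the partials at $0_{s}$ both reduce to the inequality $\mu>\max_{l}w_{l}$, which is precisely the hypothesis, giving $W\in C^{1}(\mathbb{R}^{s})\cap C^{\infty}(\mathbb{R}^{s}\setminus\{0_{s}\})$.

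Third, I would compute the Lie derivative. From the homogeneity of $\psi$ one has $\psi_{i}(\vartheta)=\lambda^{-\theta-w_{i}}\psi_{i}(\delta_{\lambda}\vartheta)$, so the chain rule gives $\nabla_{\vartheta}\big[V(\delta_{\lambda}\vartheta)\big]\cdot\psi(\vartheta)=\lambda^{-\theta}(L_{\psi}V)(\delta_{\lambda}\vartheta)$ and hence
\[
L_{\psi}W(\vartheta)=\int_{0}^{\infty}\lambda^{-\mu-\theta-1}\,a'\big(V(\delta_{\lambda}\vartheta)\big)\,(L_{\psi}V)(\delta_{\lambda}\vartheta)\,d\lambda .
\]
Since $a'\geq 0$ and $L_{\psi}V\leq 0$, the integrand is non-positive; and for $\vartheta\neq 0_{s}$ it is strictly negative on the positive-measure set where $V(\delta_{\lambda}\vartheta)>1$, so $L_{\psi}W(\vartheta)<0$. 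As $L_{\psi}W$ is homogeneous of degree $\mu+\theta$, which is positive exactly because $\mu>-\theta$, it is continuous at the origin with $L_{\psi}W(0_{s})=0$; therefore $L_{\psi}W$ is negative definite, which completes the argument.

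The conceptual content is light — the averaging formula for $W$ and the two homogeneity bookkeeping identities — so \emph{the main obstacle is technical}: carrying out the convergence and differentiation-under-the-integral estimates with domination that is locally uniform in $\vartheta$ near both endpoints $\lambda=0$ and $\lambda=\infty$, and pinning down the behavior at the origin sharply enough to see that the two scalar constraints $\mu>\max_{l}w_{l}$ and $\mu>-\theta$ are exactly what is needed for $W\in C^{1}$ and for $L_{\psi}W$ to be continuous (hence negative definite), respectively. A secondary care point is that $\psi$ is merely continuous, so one must invoke the converse Lyapunov theorem in its non-Lipschitz form rather than the classical statement for smooth vector fields.
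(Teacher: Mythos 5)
This lemma is stated in the paper without proof --- it is quoted directly from \cite[Theorem 2]{Rosier} and \cite[Theorem 6.2]{Bhat2} --- and your argument is exactly the construction used in those references (Kurzweil's converse Lyapunov theorem for continuous vector fields, followed by averaging $a(V(\delta_\lambda\vartheta))$ against $\lambda^{-\mu-1}d\lambda$ over the dilation), so it coincides with the source's proof in every essential respect and is correct. One refinement worth adopting from Rosier: take $a$ constant for $r\geq 2$, so that $a'$ is supported in $[1,2]$; then for $\vartheta$ ranging over a compact set away from $0_{s}$ the differentiated integrand is supported in a fixed compact subinterval of $(0,\infty)$, and the domination you identify as the main technical obstacle becomes immediate, whereas with your choice ($a'>0$ for all $r>1$) integrability of $\lambda^{-\mu-1+w_i}a'(V(\delta_\lambda\vartheta))\,\partial_i V(\delta_\lambda\vartheta)$ near $\lambda=\infty$ would require growth control on $\nabla V$ that Kurzweil's theorem does not supply.
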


\begin{lemma}\cite[Lemma 4.2]{Bhat2} \label{lemma2.1}
 Let $W_1,W_2:\mathbb{R}^s\to\mathbb{R}$ be continuous functions, homogeneous of degree $\theta_1>0,\theta_2>0$ with respect to the same weights
respectively, and $W_1$ is positive definite. Then for each $\vartheta\in\mathbb{R}^s$, it holds that
$(\min_{ \{z\in\mathbb{R}^{s} :\;W_1(z)=1\}}W_2(z))(W_1(\vartheta))^{\frac{\theta_2}{\theta_1}}\le W_2(\vartheta)
\leq  (\max_{\{z\in \mathbb{R}^{s} :\;W_1(z)=1\}}W_2(z))(W_1(\vartheta))^{\frac{\theta_2}{\theta_1}}$.
\end{lemma}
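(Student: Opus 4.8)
The plan is to deduce the two-sided bound from a comparison of $W_2$ on the level set $L:=\{z\in\mathbb{R}^s:\,W_1(z)=1\}$, exploiting the dilation $\delta_\lambda(\vartheta):=(\lambda^{w_1}\vartheta_1,\dots,\lambda^{w_s}\vartheta_s)$, $\lambda>0$, with respect to which $W_1$ and $W_2$ are homogeneous of degrees $\theta_1$ and $\theta_2$. First I would dispose of the case $\vartheta=0_s$: positive definiteness gives $W_1(0_s)=0$, while homogeneity of positive degree gives $W_2(0_s)=0$ (e.g. $W_2(0_s)=\lambda^{\theta_2}W_2(0_s)$ for every $\lambda>0$), so both sides of the claimed inequality vanish and there is nothing to prove.

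For $\vartheta\neq0_s$, the main idea is simple: since $W_1$ is positive definite, $W_1(\vartheta)>0$, so I may set $\lambda_\vartheta:=(W_1(\vartheta))^{-1/\theta_1}>0$ and put $z_\vartheta:=\delta_{\lambda_\vartheta}(\vartheta)$. Then $W_1(z_\vartheta)=\lambda_\vartheta^{\theta_1}W_1(\vartheta)=1$, hence $z_\vartheta\in L$ and therefore $m\le W_2(z_\vartheta)\le M$, where $m:=\min_{z\in L}W_2(z)$ and $M:=\max_{z\in L}W_2(z)$. Homogeneity of $W_2$ yields $W_2(z_\vartheta)=\lambda_\vartheta^{\theta_2}W_2(\vartheta)=(W_1(\vartheta))^{-\theta_2/\theta_1}W_2(\vartheta)$; inserting this into $m\le W_2(z_\vartheta)\le M$ and multiplying by $(W_1(\vartheta))^{\theta_2/\theta_1}>0$ yields exactly the asserted estimate.

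The step I expect to require the most care is making $m$ and $M$ legitimate, i.e. showing that $L$ is nonempty and compact so that the extreme values of the continuous function $W_2$ on $L$ are attained (Weierstrass theorem). Closedness of $L$ follows at once from continuity of $W_1$; the real content is boundedness, which I would obtain by first proving that a continuous, positive definite function that is homogeneous of degree $\theta_1>0$ is radially unbounded. For that, given $v\neq0_s$, let $\lambda(v)>0$ be the unique scalar with $\|\delta_{\lambda(v)}(v)\|=1$ — it is unique because $\lambda\mapsto\sum_{i=1}^s\lambda^{2w_i}v_i^2$ is continuous, strictly increasing, and maps $(0,\infty)$ onto $(0,\infty)$ — and set $c:=\min_{\|z\|=1}W_1(z)>0$ and $w_{\max}:=\max_{1\le i\le s}w_i$. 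For $\|v\|$ large one then has $\lambda(v)<1$, whence $1=\|\delta_{\lambda(v)}(v)\|^2\ge\lambda(v)^{2w_{\max}}\|v\|^2$, so $\lambda(v)\le\|v\|^{-1/w_{\max}}$ and $W_1(v)=\lambda(v)^{-\theta_1}W_1(\delta_{\lambda(v)}(v))\ge c\,\lambda(v)^{-\theta_1}\ge c\,\|v\|^{\theta_1/w_{\max}}\to\infty$. Radial unboundedness makes $L$ bounded, and the intermediate value theorem along a ray (using $W_1(0_s)=0$ and $W_1\to\infty$) makes $L$ nonempty. With $L$ compact and nonempty, $m$ and $M$ are finite and attained, which closes the argument.
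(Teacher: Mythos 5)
Your proof is correct: the dilation-scaling argument (map $\vartheta\neq 0_s$ onto the level set $\{W_1=1\}$ via $\lambda_\vartheta=(W_1(\vartheta))^{-1/\theta_1}$, compare there, and pull back by homogeneity of $W_2$), together with the careful verification that the level set is nonempty and compact so that the extrema are attained, is exactly the standard proof of this result. The paper itself gives no proof but simply cites \cite[Lemma 4.2]{Bhat2}, and your argument is essentially the one found there, with the $\vartheta=0_s$ case and the radial-unboundedness step handled correctly.
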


\begin{lemma}\cite[Lemma 8]{Perruq}\label{lemma2.2}
If $\nu \in(1-\frac{1}{n},1)$, then the vector field
\begin{eqnarray}\label{functionchi}
\hspace{-1.2cm}&&\Phi(\vartheta):=
(\Phi_{1}(\vartheta),\cdots,\Phi_{n}(\vartheta),\Phi_{n+1}(\vartheta))
\cr \hspace{-1.2cm}&&:=(\vartheta_2-a_{1}\left\langle\vartheta_1\right\rangle^{\nu},\cdots,\vartheta_{n+1}-a_{n}\left\langle\vartheta_1\right\rangle^{n\nu-(n-1)},
\cr\hspace{-1.2cm}  &&-a_{n+1}\left\langle\vartheta_1\right\rangle^{(n+1)\nu-n}), \forall \vartheta=(\vartheta_1,\cdots,\vartheta_{n+1})\in\mathbb{R}^{n+1}
\end{eqnarray}
is homogeneous of degree $\nu-1$ with respect to weights
$\{(l-1)\nu-(l-2)\}_{l=1}^{n+1}$.
\end{lemma}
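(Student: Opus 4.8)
Since the claim is purely about homogeneity, the plan is to verify the defining scaling identity of a homogeneous vector field directly, one component of $\Phi$ at a time. First I would fix the notation $\theta:=\nu-1$ and $w_l:=(l-1)\nu-(l-2)$ for $l=1,\dots,n+1$, and record three elementary facts that drive everything: (i) $w_1=1$; (ii) $w_{l+1}=w_l+\theta$, equivalently $w_{l+1}=l\nu-(l-1)$, so that the exponent of $\langle\vartheta_1\rangle$ inside $\Phi_l$ is exactly $w_{l+1}$ for $1\le l\le n$, while the exponent inside $\Phi_{n+1}$ is $(n+1)\nu-n=\theta+w_{n+1}$; (iii) by the hypothesis $\nu\in\bigl(1-\tfrac1n,\,1\bigr)$ all of the $w_l$ are positive — the binding case being $w_{n+1}=n\nu-(n-1)>0$ — so that $\{w_l\}_{l=1}^{n+1}$ is an admissible family of dilation weights and $\Phi$ is well defined with $\Phi(0_{n+1})=0_{n+1}$.

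Next I would record the only algebraic fact actually needed: for every real $s$ and every $\lambda>0$,
\[
\langle\lambda\vartheta\rangle^{s}=\mathrm{sign}(\lambda\vartheta)\,|\lambda\vartheta|^{s}=\mathrm{sign}(\vartheta)\,\lambda^{s}|\vartheta|^{s}=\lambda^{s}\langle\vartheta\rangle^{s}.
\]
Applying the dilation $\vartheta_l\mapsto\lambda^{w_l}\vartheta_l$ and using $w_1=1$, for $1\le l\le n$ one obtains
\[
\Phi_l\bigl(\lambda^{w_1}\vartheta_1,\dots,\lambda^{w_{n+1}}\vartheta_{n+1}\bigr)
=\lambda^{w_{l+1}}\vartheta_{l+1}-a_l\lambda^{\,l\nu-(l-1)}\langle\vartheta_1\rangle^{\,l\nu-(l-1)}
=\lambda^{\theta+w_l}\,\Phi_l(\vartheta),
\]
because $l\nu-(l-1)=w_{l+1}=\theta+w_l$; and for the last component,
\[
\Phi_{n+1}\bigl(\lambda^{w_1}\vartheta_1,\dots,\lambda^{w_{n+1}}\vartheta_{n+1}\bigr)
=-a_{n+1}\lambda^{\,(n+1)\nu-n}\langle\vartheta_1\rangle^{\,(n+1)\nu-n}
=\lambda^{\theta+w_{n+1}}\,\Phi_{n+1}(\vartheta),
\]
because $(n+1)\nu-n=(\nu-1)+\bigl(n\nu-(n-1)\bigr)=\theta+w_{n+1}$. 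Hence each $\Phi_l$ is homogeneous of degree $\theta+w_l$ with respect to $\{w_l\}_{l=1}^{n+1}$, which is exactly the assertion that $\Phi$ is a homogeneous vector field of degree $\theta=\nu-1$ with weights $\{(l-1)\nu-(l-2)\}_{l=1}^{n+1}$.

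I do not expect a genuine obstacle: the argument is entirely bookkeeping. The only points that need care are the two index identities $l\nu-(l-1)=w_l+\theta$ (for $1\le l\le n$) and $(n+1)\nu-n=w_{n+1}+\theta$, the observation that the first weight equals $1$ so the dilation acts on $\langle\vartheta_1\rangle^{s}$ by the plain factor $\lambda^{s}$, and the scaling rule above; the one thing that is genuinely easy to slip on is keeping straight the distinct roles of the weight $w_l$ (which rescales the variable $\vartheta_l$) and the degree $\theta+w_l$ (which is assigned to the component $\Phi_l$).
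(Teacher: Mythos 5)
Your verification is correct. The paper does not prove this lemma itself but imports it from \cite[Lemma~8]{Perruq}; your direct check of the scaling identity is exactly the standard argument one would give there. The two key index identities you isolate, $l\nu-(l-1)=w_l+\theta=w_{l+1}$ for $1\le l\le n$ and $(n+1)\nu-n=w_{n+1}+\theta$, together with $w_1=1$ and the rule $\langle\lambda\vartheta\rangle^{s}=\lambda^{s}\langle\vartheta\rangle^{s}$ for $\lambda>0$, are precisely what make each component $\Phi_l$ homogeneous of degree $\theta+w_l$, and your observation that $\nu>1-\tfrac1n$ guarantees the smallest weight $w_{n+1}=n\nu-(n-1)$ is positive completes the admissibility of the dilation. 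Nothing is missing.
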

Define
\begin{equation}\label{matricf2}\disp
G=
\begin{pmatrix}
-a_{1}    & 1      &0      &  \cdots   &  0             \cr -a_{2}
& 0 & 1   &  \cdots   &  0             \cr \cdots &\cdots & \cdots &
\cdots& \cdots  \cr -a_{n}     & 0     & 0     &  \ddots   &  1 \cr
-a_{n+1} & 0 &0 & \cdots & 0
\end{pmatrix}_{(n+1)\times (n+1)}.
\end{equation}
\begin{lemma}\cite[Theorem 10]{Perruq}\label{lemma2.3}
For $\nu\in (1-\frac{1}{n},1)$, if the  matrix $G$ in
 (\ref{matricf2}) is Hurwitz, then the system $\dot{\vartheta}(t)=\Phi(\vartheta(t))$ is
globally finite-time stable.
\end{lemma}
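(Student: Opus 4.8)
The plan is to reduce the assertion to a global asymptotic stability statement for a homogeneous vector field of negative degree, and then to obtain that stability by deforming the linear Hurwitz case $\nu=1$ into the range $\nu\in(1-\tfrac1n,1)$. First I would record the structural facts. By Lemma \ref{lemma2.2} the field $\Phi$ is continuous on $\mathbb{R}^{n+1}$, vanishes at $0_{n+1}$, and is homogeneous of degree $\theta:=\nu-1$ with respect to the weights $w_l:=(l-1)\nu-(l-2)$, $l=1,\dots,n+1$; the hypothesis $\nu\in(1-\tfrac1n,1)$ is exactly what forces $\theta<0$ while keeping every weight positive (the binding one being $w_{n+1}=n\nu-(n-1)>0$). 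Suppose for the moment that the origin of $\dot{\vartheta}=\Phi(\vartheta)$ is globally asymptotically stable. Choosing any $\mu>\max_{1\le l\le n+1}\{-\theta,w_l\}$, Lemma \ref{fdf} furnishes a positive definite, radially unbounded $W\in C^{1}(\mathbb{R}^{n+1};\mathbb{R})$, homogeneous of degree $\mu$ with respect to $\{w_l\}$, with $L_{\Phi}W$ negative definite; since $L_{\Phi}W$ is then homogeneous of degree $\mu+\theta>0$, Lemma \ref{lemma2.1} applied with $W_1=W$ and $W_2=-L_{\Phi}W$ gives $c>0$ with $L_{\Phi}W(\vartheta)\le-c\,(W(\vartheta))^{(\mu+\theta)/\mu}$ for all $\vartheta$. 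Because $\theta<0$ and $\mu>-\theta$, the exponent $(\mu+\theta)/\mu$ lies in $(0,1)$, so $t\mapsto W(\vartheta(t))$ satisfies a scalar comparison inequality whose solutions reach $0$ in finite time and stay there, which together with the Lyapunov stability contained in global asymptotic stability is precisely the global finite-time stability of Definition \ref{def1}. Moreover, $\Phi$ being \emph{globally} homogeneous (the dilation carries trajectories to trajectories), asymptotic stability of the origin is already equivalent to the global one. Hence everything reduces to proving that the origin of $\dot{\vartheta}=\Phi(\vartheta)$ is asymptotically stable.

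For that stability I would deform in $\nu$. Writing $\Phi_\nu$ and $w_l(\nu)$ for the parameter dependence, at $\nu=1$ one has $\Phi_1(\vartheta)=G\vartheta$, and since $G$ is Hurwitz there is $P=P^{\top}>0$ with $G^{\top}P+PG=-I_{n+1}$, so $V(\vartheta)=\vartheta^{\top}P\vartheta$ certifies global exponential stability at $\nu=1$; this quadratic and the Hurwitz relation are the only place the hypothesis on $G$ enters. I would then construct, for every $\nu\in(1-\tfrac1n,1]$, a positive definite, radially unbounded $W_\nu$, homogeneous with respect to $\{w_l(\nu)\}$, with $L_{\Phi_\nu}W_\nu<0$ on $\mathbb{R}^{n+1}\setminus\{0_{n+1}\}$, built by the ``adding a power integrator'' recursion matched to the lower-triangular form of $\Phi_\nu$: at stage $l$ one augments the candidate by a term of the form $\beta_l\int_{v_l^{\ast}(\vartheta)}^{\vartheta_l}\bigl(\langle s\rangle^{1/w_l(\nu)}-\langle v_l^{\ast}(\vartheta)\rangle^{1/w_l(\nu)}\bigr)^{\kappa_l}\,ds$ with virtual ``controls'' $v_l^{\ast}$ and exponents $\kappa_l$ dictated by the weights, and gains $\beta_l$ fixed by a chain of weighted Young inequalities, the recursion being seeded at $\nu=1$ by $V$. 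Once such a $W_\nu$ is in hand, $\dot{\vartheta}=\Phi_\nu(\vartheta)$ is globally asymptotically stable, and the first part closes the argument.

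The hard part will be this Lyapunov construction carried out uniformly in $\nu$: running the recursion through with the gains $\beta_l$ and the decay rate controlled over every closed subinterval of $(1-\tfrac1n,1]$, and verifying $L_{\Phi_\nu}W_\nu<0$ — a long but mechanical sequence of weighted Young inequalities exploiting $\theta=\nu-1<0$ and the positivity of the weights. (One could instead attempt a connectedness argument: the set $S$ of parameters for which $\dot{\vartheta}=\Phi_\nu(\vartheta)$ has a globally asymptotically stable origin contains $1$ and, by a perturbation argument together with the equivalence of local and global stability for homogeneous systems, is open in $(1-\tfrac1n,1]$; but closing $S$ runs into exactly the same need for a $\nu$-uniform Lyapunov estimate.) All the genuine difficulty is concentrated there; the two homogeneity reductions of the first step are routine given the lemmas already available.
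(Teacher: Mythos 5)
The paper offers no proof of this lemma: it is imported as \cite[Theorem 10]{Perruq}, so there is nothing in-paper to compare against and your attempt has to stand on its own. Your first reduction does stand: granting that the origin of $\dot{\vartheta}=\Phi(\vartheta)$ is globally asymptotically stable, Lemma \ref{fdf} supplies a homogeneous Lyapunov function $W$ of degree $\mu>\max_{l}\{-\theta,w_{l}\}$ with $\theta=\nu-1<0$, the Lie derivative $L_{\Phi}W$ is homogeneous of degree $\mu+\theta>0$, Lemma \ref{lemma2.1} converts its negative definiteness into $L_{\Phi}W\le -c\,W^{(\mu+\theta)/\mu}$ with exponent in $(0,1)$, and the scalar comparison inequality gives finite-time convergence; combined with Lyapunov stability this is exactly Definition \ref{def1}. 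That half is correct and standard.

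The genuine gap is that the entire content of the lemma---global asymptotic stability of the homogeneous error field for the stated range of $\nu$---is never established. You replace it by a program (an ``adding a power integrator'' recursion run uniformly in $\nu$ over $(1-\tfrac{1}{n},1]$, or alternatively a connectedness argument), and you say yourself that ``all the genuine difficulty is concentrated there.'' A proof whose only nontrivial step is deferred to an unexecuted multi-stage construction is not a proof. It is also not the route the cited source takes: there one keeps the quadratic Lyapunov function $V(\vartheta)=\vartheta^{\top}P\vartheta$ of the Hurwitz linear case $\nu=1$ and shows, by continuity of $(\nu,\vartheta)\mapsto L_{\Phi_{\nu}}V(\vartheta)$ on the compact homogeneous unit sphere, that $L_{\Phi_{\nu}}V<0$ there for all $\nu$ sufficiently close to $1$, the negativity then propagating to all of $\mathbb{R}^{n+1}\setminus\{0_{n+1}\}$ by the dilation. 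That argument is a few lines, but it only delivers the conclusion for $\nu$ in some interval $(1-\varepsilon,1)$, which is how the theorem in \cite{Perruq} is actually phrased; obtaining the full range $(1-\tfrac{1}{n},1)$ for an arbitrary Hurwitz $G$ is precisely the uniform-in-$\nu$ Lyapunov estimate you leave open. So either carry out that construction in detail, or prove the weaker ``$\nu$ close to $1$'' version by the perturbation argument; as written, the proposal does neither.
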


\begin{lemma}(The multi-dimensional It\^{o}'s formula) \cite[p.36, Theorem 6.4]{mao}
Let $x(t)$ be a $n$-dimensional It\^{o} process on $t\geq 0$ with the stochastic differential
$dx(t)=f(t)dt+g(t)dB(t)$,
where $f\in L^{1}([0,\infty);\mathbb{R}^{n})$, $g\in L^{2}([0,\infty);\mathbb{R}^{n\times m})$ and
$B(t)$ is a $m$-dimensional standard Brownian motion. Let
$V\in C^{2}(\mathbb{R}^{n};\mathbb{R})$. Then $V(x(t))$ is again an It\^{o} process  with the stochastic
differential given by
\begin{eqnarray}\label{LVdefin}
&&\hspace{-0.8cm}dV(x(t))=[\frac{\partial V(x(t))}{\partial x}f(t)+\frac{1}{2}\mbox{Tr}\{g^{\top}(t)\frac{\partial^{2} V(x(t))}{\partial x^{2}}g(t)\}]dt
\cr&&\hspace{1cm} +\frac{\partial V(x(t))}{\partial x}g(t)dB(t)\;
\cr&&\hspace{0.7cm} =:\mathcal{L}V(x(t))dt+\frac{\partial V(x(t))}{\partial x}g(t)dB(t)\;\;\; a.s.
\end{eqnarray}
\end{lemma}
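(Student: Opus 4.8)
The plan is to follow the classical localization-plus-Taylor-expansion proof of It\^{o}'s formula. First I would reduce to a bounded setting: for $k\in\mathbb{N}$ define the stopping time $\tau_k:=\inf\{t\geq0:\|x(t)\|\geq k \text{ or } \int_0^t(\|f(s)\|+\|g(s)\|^2)\,ds\geq k\}$ and establish the identity on $[0,t\wedge\tau_k]$. Since $f\in L^1$ and $g\in L^2$, one has $\tau_k\uparrow\infty$ $a.s.$, so letting $k\to\infty$ recovers the general case. On $[0,\tau_k]$ the process $x$ stays in a fixed compact set $K$, hence $V$, its gradient $\frac{\partial V}{\partial x}$ and its Hessian $\frac{\partial^{2}V}{\partial x^{2}}$ are bounded and uniformly continuous on $K$, and $f,g$ may be taken bounded. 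A second reduction replaces $f$ and $g$ by simple (piecewise-constant in $t$) processes converging to them in $L^1$ and $L^2$ respectively; by continuity of the It\^{o} integral in $L^2$ and of the Lebesgue integral, it suffices to prove the identity for such simple integrands.

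Next, fix $t>0$ and a partition $0=t_0<t_1<\cdots<t_N=t$, refined to contain all jump times of the simple integrands, with mesh $\delta\to0$. Write $\Delta t_i:=t_{i+1}-t_i$, $\Delta B_i:=B(t_{i+1})-B(t_i)$, $\Delta x_i:=x(t_{i+1})-x(t_i)$, and telescope $V(x(t))-V(x(0))=\sum_{i=0}^{N-1}\big[V(x(t_{i+1}))-V(x(t_i))\big]$. Taylor's theorem gives, for each $i$, $V(x(t_{i+1}))-V(x(t_i))=\frac{\partial V(x(t_i))}{\partial x}\Delta x_i+\frac12\Delta x_i^{\top}\frac{\partial^{2}V(\xi_i)}{\partial x^{2}}\Delta x_i$ for some $\xi_i$ on the segment joining $x(t_i)$ and $x(t_{i+1})$. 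Because the integrands are simple, $\Delta x_i=f(t_i)\Delta t_i+g(t_i)\Delta B_i$ exactly. For the first-order sum, $\sum_i\frac{\partial V(x(t_i))}{\partial x}f(t_i)\Delta t_i+\sum_i\frac{\partial V(x(t_i))}{\partial x}g(t_i)\Delta B_i$ converges, as $\delta\to0$, to $\int_0^t\frac{\partial V(x(s))}{\partial x}f(s)\,ds+\int_0^t\frac{\partial V(x(s))}{\partial x}g(s)\,dB(s)$ directly from the definition of the It\^{o} integral for simple integrands together with the $L^2$-isometry (all these convergences are in $L^2(\Omega)$, hence $a.s.$ along a subsequence, and an $a.s.$-continuity-in-$t$ argument on the localized interval upgrades this to $a.s.$ simultaneously for all $t$).

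The heart of the proof is the second-order sum. Expanding $\Delta x_i^{\top}H_i\Delta x_i$ with $H_i:=\frac{\partial^{2}V(\xi_i)}{\partial x^{2}}$, the contributions containing $\Delta t_i^{2}$ or $\Delta t_i\Delta B_i$ vanish in the limit (their $L^1$-norms are $O(\delta)$), leaving $\sum_i\Delta B_i^{\top}g^{\top}(t_i)H_i\,g(t_i)\Delta B_i$. Replacing $H_i$ by $\frac{\partial^{2}V(x(t_i))}{\partial x^{2}}$ introduces an error bounded by the modulus of continuity of $\frac{\partial^{2}V}{\partial x^{2}}$ on $K$ evaluated at $\max_i\|\Delta x_i\|\to0$ $a.s.$ It then remains to prove $\sum_i\Delta B_i^{\top}C_i\Delta B_i\to\int_0^t\mbox{Tr}\{C(s)\}\,ds$ in $L^2$, where $C_i:=g^{\top}(t_i)\frac{\partial^{2}V(x(t_i))}{\partial x^{2}}g(t_i)$ and $C(s):=g^{\top}(s)\frac{\partial^{2}V(x(s))}{\partial x^{2}}g(s)$; writing $\Delta B_i^{\top}C_i\Delta B_i=\sum_{p,q}(C_i)_{pq}\Delta B_i^{(p)}\Delta B_i^{(q)}$ and using $\mathbb{E}[\Delta B_i^{(p)}\Delta B_i^{(q)}\,|\,\mathcal{F}_{t_i}]=\delta_{pq}\Delta t_i$ together with the independence of Brownian increments, the off-diagonal and $(p=q)$-centered terms form martingale increments that vanish in $L^2$ while the diagonal part yields the trace integral. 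I expect precisely this quadratic-variation estimate — showing $\mathbb{E}\big|\sum_i(C_i)_{pq}\big(\Delta B_i^{(p)}\Delta B_i^{(q)}-\delta_{pq}\Delta t_i\big)\big|^{2}\to0$ — to be the main technical obstacle; the remaining manipulations are routine bookkeeping. Finally, combining the first- and second-order limits and undoing the two reductions (simple $\to$ general integrands via the $L^1$/$L^2$ approximation and continuity of the integrals, then $k\to\infty$ using $\tau_k\uparrow\infty$ $a.s.$) delivers \dref{LVdefin} for every $t\geq0$, $a.s.$
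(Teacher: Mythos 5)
This lemma is not proved in the paper at all --- it is quoted verbatim as a known result from the cited reference (Mao, \emph{Stochastic Differential Equations and Applications}, Theorem 6.4, p.~36). Your localization-plus-Taylor-expansion argument, with the reduction to simple integrands and the quadratic-variation estimate $\mathbb{E}\bigl|\sum_i (C_i)_{pq}(\Delta B_i^{(p)}\Delta B_i^{(q)}-\delta_{pq}\Delta t_i)\bigr|^2\to 0$ as the key step, is correct and is essentially the standard proof given in that reference.
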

\begin{remark}
The definition of $\mathcal{L}V(x(t))$ in \dref{LVdefin} will be used throughout the paper. It should be pointed out that we could not define the infinitesimal generator $\mathcal{L}V$
itself because  some random coefficients brought by bounded and colored noises such as aforementioned $f(t)$ and $g(t)$  occurring in the stochastic differential.
This would not cause any obstacle in the following theoretical analysis because we only use  $\mathbb{E}\mathcal{L}V(x(t))=\frac{d\mathbb{E}V(x(t))}{dt}$
when $\int^{t}_{0}\frac{\partial V(x(s))}{\partial x}g(s)dB(s)$ is a martingale for each $t\geq 0$.
\end{remark}

Let $(\Omega,\mathcal{F},\mathbb{F}, P)$ be
a complete filtered probability space with a filtration
$\mathbb{F}=\{\mathcal{F}_{t}\}_{t\geq 0}$  on which
two  mutually independent one-dimensional standard Brownian motions $B_{i}(t)\;(i=1,2)$ are defined.
In this paper, we consider a class of uncertain stochastic systems driven by bounded and colored noises as follows:
\begin{equation}\label{system1.2}
\left\{\begin{array}{l} \dot{x}_{1}(t)=x_{2}(t), \cr
\dot{x}_{2}(t)=x_{3}(t), \cr \hspace{1.1cm} \vdots \cr
\dot{x}_{n}(t)=f(t,x(t),v_{1}(t),v_{2}(t))+u(t),\cr
y(t)=x_{1}(t),
\end{array}\right.
\end{equation}
where $x(t)=(x_{1}(t),\cdots,x_{n}(t))\in \mathbb{R}^{n}$,
$u(t)\in \mathbb{R}$ and $y(t)\in \mathbb{R}$ are the state,
control input and output measurement, respectively; $f:[0,\infty)\times \mathbb{R}^{n+2}\rightarrow \mathbb{R}$
is an unknown system function satisfying Assumption (A1); $v_{1}(t):=\sigma(t,B_{1}(t))$ defined by
an unknown bounded function $\sigma:[0,\infty)\times \mathbb{R}\rightarrow \mathbb{R}$ satisfying Assumption (A2) is the bounded noise,
and $v_{2}(t)$ is the colored noise that is the solution to an It\^{o}-type stochastic differential equation
(see, e.g., \cite[p.426]{colorednoise}, \cite[p.101]{mao}):
\begin{equation}\label{21equ}
dv_{2}(t)=-\alpha_{1} v_{2}(t)dt+ \alpha_{1} \sqrt{2\alpha_{2}}dB_{2}(t),
\end{equation}
where $\alpha_{1}>0$ and $\alpha_{2}>0$ are constants representing
the correlation time and the noise intensity, respectively.
It is noteworthy that  white noise  as a stationary stochastic process that has zero mean and constant
spectral density, is the generalized derivative of the Brownian motion (see, e.g., \cite[p.51, Theorem 3.14]{duan2015})
and often used to represent stochastic disturbances in many scenes.
However, the white noise could not invariably describe the  stochastic
disturbances emerging in practice due to the fact that its $\delta$-function correlation is
an idealization of the correlations of real processes which often have finite, or even long, correlation time \cite{colorednoise}.
A more realistic stochastic noise could be an exponentially correlated process,
which is known as aforementioned colored noise or Ornstein-Uhlenbeck process \cite{colorednoise,colourednoise2}.
\begin{remark}
This paper is the first effort on the event-triggered ESO for uncertain stochastic systems,
where both the designs and analyses are substantially different from the deterministic counterpart in \cite{eventESO}.
Therefore, similar to the plant considered in \cite{eventESO}, this paper only considers the most foundational
essential-integral-chain systems with matched uncertainties and disturbances which are widely considered
in the theory and application research of ADRC, to focus on the new designs and theoretical analyses of
linear and nonlinear event-triggered ESOs
when uncertainties and disturbances are stochastic. As its development to more general systems like
 lower triangle nonlinear ones \cite{zhaofrac} and MIMO ones
\cite{wuMIMOESO} which does not cause essential obstacle,  is not the main concern of this paper.
\end{remark}
\section{Linear and nonlinear event-triggered ESOs designs and main results}
Define the stochastic total disturbance (extended state) of system (\ref{system1.2}):
$x_{n+1}(t)=f(t,x(t),v_{1}(t),v_{2}(t))$,
which contains total nonlinear coupling effects of unmodeled dynamics, external deterministic disturbance, bounded noise and colored noise.
To estimate not only unmeasured state but also stochastic total disturbance of system (\ref{system1.2}), a
 linear event-triggered extended state observer (ESO)  is designed as follows:
 \begin{equation}\label{linearobserver}
\left\{\begin{array}{l}
\dot{\hat{x}}_{1}(t)=\hat{x}_{2}(t)+a_{1}r\left(y(t_{k})-\hat{x}_{1}(t)\right),
\cr
\dot{\hat{x}}_{2}(t)=\hat{x}_{3}(t)+a_{2}r^{2}\left(y(t_{k})-\hat{x}_{1}(t)\right),\cr
\hspace{1.2cm} \vdots \cr
 \dot{\hat{x}}_{n}(t)=\hat{x}_{n+1}(t)+a_{n}r^{n}\left(y(t_{k})-\hat{x}_{1}(t)\right)+u(t),  \cr
 \dot{\hat{x}}_{n+1}(t)=a_{n+1}r^{n+1}\left(y(t_{k})-\hat{x}_{1}(t)\right),
\end{array}\right.
\end{equation}
where $t\in [t_{k},t_{k+1}),\; k\in \mathbb{Z}^{+}$, $r$  is the tuning gain, and the parameters $a_{i}$'s are chosen such that the matrix $G$ defined in (\ref{matricf2}) is Hurwitz;
$\hat{x}(t):=(\hat{x}_{1}(t),\cdots,\hat{x}_{n+1}(t))$ contains the estimates of the state $x(t)$ and the stochastic
total disturbance $x_{n+1}(t)$;
$t_{k}$'s are stochastic execution times (stopping times) determined by the following
 ETM
\begin{equation}\label{lineartriggermechemi}
t_{k+1}=\inf\{t\geq t_{k}+\tau:\;|y(t)-y(t_{k})|\geq \theta r^{-(n+\frac{1}{2})} \},
\end{equation}
with $t_{1}=0$, $\tau$ being a positive constant specified in the following Theorem \ref{theorem3.1} and $\theta$ being any free positive tuning parameter.
The nonlinear event-triggered ESO via homogeneity is designed as
\begin{eqnarray}\label{nonlinearESO}
\left\{
\begin{array}{l}
 \dot{\hat{x}}_{1}(t)=\hat{x}_{2}(t)+\frac{a_{1}}{r^{n-1}}
\left \langle r^{n}(y(t_{k})-\hat{x}_{1}(t))\right\rangle^{\nu},
\cr
\dot{\hat{x}}_{2}(t)=\hat{x}_{3}(t)+\frac{a_{2}}{r^{n-2}}
\left\langle r^{n}(y(t_{k})-\hat{x}_{1}(t))\right\rangle^{2\nu-1},
\cr \hspace{1.2cm} \vdots \cr
\dot{\hat{x}}_{n}(t)=\hat{x}_{n+1}(t)+a_{n}
\left \langle r^{n}(y(t_{k})-\hat{x}_{1}(t))\right\rangle^{n\nu-(n-1)} \cr\hspace{1.3cm}+u(t), \cr
\dot{\hat{x}}_{n+1}(t)=a_{n+1}r
\left \langle r^{n}(y(t_{k})-\hat{x}_{1}(t))\right\rangle^{(n+1)\nu-n},
\end{array}\right.
\end{eqnarray}
where $t\in [t_{k},t_{k+1}),\; k\in \mathbb{Z}^{+}$, $r$  is the tuning gain,  the parameters $a_{i}$'s are chosen such that the matrix $G$ defined in (\ref{matricf2}) is Hurwitz,
and $\nu$ is a positive constant specified in the following Theorem \ref{theorem3.2};
$t_{k}$'s are stochastic execution times (stopping times) determined by the following ETM
\begin{equation}\label{nonlineartrigger}
t_{k+1}=\inf\{t\geq t_{k}+\tau^{*}:\;|y(t)-y(t_{k})|\geq \theta^{*} r^{-(n+\frac{1}{n\nu-(n-1)})} \},
\end{equation}
with $t_{1}=0$, $\tau^{*}$ being a positive constant specified in the following Theorem \ref{theorem3.2} and  $\theta^{*}$ being positive tuning parameter.

\begin{remark}
With regard to the triggering mechanism \dref{lineartriggermechemi} (or \dref{nonlineartrigger}),
 each inter-execution time $t_{k+1}-t_{k}$ is clearly not less than $\tau$ (or $\tau^{*}$),
so that the Zeno phenomenon can be naturally avoided. However, the convergence analysis of ESO
under these triggering mechanisms would be more complex.
  It should be also pointed out that in the designs of the triggering mechanisms  \dref{lineartriggermechemi}  and \dref{nonlineartrigger},
 only output  measurement are required to be monitored, not in real time, to evaluate the event-triggering condition.
 In addition, there exists a trade-off between the estimation accuracy and triggering frequency caused by
 the free tuning parameters $\theta$ and $\theta^{*}$ in aforementioned event-triggering mechanisms,
 that is, when $\theta$ or $\theta^{*}$ are tuned to be larger, the triggering frequency would be reduced yet the
 the estimation accuracy would also be reduced. This can be easily observed from the event-triggering mechanisms
 and the following main results.
\end{remark}
To guarantee the convergence of the linear and nonlinear event-triggered ESOs, the following assumptions are required.

{\bf  Assumption (A1).}
 The function $f$  has first-order continuous partial derivative and
second-order continuous partial derivative with respect to its arguments $(t,x)$ and
$(v_{1},v_{2})$, respectively, and there exist known constants  $\beta_{i}>0
\;(i=1,2,3,4)$ and non-negative functions $\gamma_{i}\in
C(\mathbb{R};\mathbb{R})\;(i=1,2)$ such that for all $t\ge 0$, $x \in \mathbb{R}^{n}$,
$v_{1}\in \mathbb{R}$, $v_{2}\in \mathbb{R}$, it holds that
\begin{eqnarray*}\label{2.40}
&&\hspace{-0.4cm}|f(t,x,v_{1},v_{2})|+|\frac{\partial f(t,x,v_{1},v_{2})}{\partial t}|
+|\frac{\partial f(t,x,v_{1},v_{2})}{\partial v_{1}}|
 \cr&&\hspace{-0.4cm}+ |\frac{\partial^{2} f(t,x,v_{1},v_{2})}{\partial v^{2}_{1}}|
\leq  \beta_{1}+\beta_{2}\|x\|+\beta_{3}\|v_{2}\|+\gamma_{1}(v_{1}) ; \cr&&\hspace{-0.4cm}
\sum^{n}_{i=1}|\frac{\partial f(t,x,v_{1},v_{2})}{\partial x_{i}}|+|\frac{\partial f(t,x,v_{1},v_{2})}{\partial v_{2}}|\cr&&\hspace{-0.4cm}+ |\frac{\partial^{2} f(t,x,v_{1},v_{2})}{\partial v^{2}_{2}}|
\leq \beta_{4}+\gamma_{2}(v_{1}).
\end{eqnarray*}

{\bf Assumption (A2).} The function $\sigma(t,\varsigma):[0,\infty)\times \mathbb{R} \rightarrow \mathbb{R}$
have first
order continuous partial derivative and second order continuous partial derivative with respect to
 $t$ and $\varsigma$, respectively, and there exists a known constant $\beta_{5}>0$, such that for all
  $t\geq 0$, $\varsigma\in \mathbb{R}$,
\begin{eqnarray*}\label{assumptiodn}
&&\hspace{-0.5cm}|\sigma(t,\varsigma)|+|\frac{\partial \sigma(t,\varsigma)}{\partial t}|
+|\frac{\partial \sigma(t,\varsigma)}{\partial \varsigma}|+
\frac{1}{2}|\frac{\partial^{2} \sigma(t,\varsigma)}{\partial \varsigma^{2}}|\leq \beta_{5}.
\end{eqnarray*}

{\bf  Assumption (A3).} The solution $x(t)$ and input $u(t)$ of system \dref{system1.2}
satisfies $\mathbb{E}\|x(t)\|^{2}+\mathbb{E}|u(t)|^{2}\leq M, \; \forall t\geq 0$,
for some known positive constant $M$.

\begin{remark}
Since the stochastic total disturbance $x_{n+1}(t)=f(t,x(t),v_{1}(t),v_{2}(t))$ is to be estimated in real time by ESO
and finally compensated in the feedback loop in the ADRC's framework, both the stochastic total disturbance
and its ``rate of change" should naturally be bounded guaranteed by Assumptions (A1)-(A3). In addition, the partial
derivatives in Assumption (A2) are assumed for the function $\sigma(\cdot,\cdot)$ defining the bounded noise, but not for the stochastic noise itself;
It can be seen that the conventional deterministic disturbance is just its special case by letting $v_{1}(t)=\sigma(t)$
that is the function with respect to the time variable $t$ only; For the stochastic counterpart, common bounded noises
like $\sin(t+B_{1}(t))$ and  $\cos(t+B_{1}(t))$ in practice (\cite{boundednoise1,boundednoise2}) are the concerning
ones satisfying Assumption (A2).
 \end{remark}

\begin{remark}
The rationality of Assumption (A3) can be further addressed as follows.
Firstly, it should be emphasized that this paper only investigates the
convergence of ESO for the open-loop system; And the boundedness of
state in Assumption (A3) is used for estimation of the state-dependent stochastic total disturbance,
which can be regarded as a ``slowly varying" condition of the open-loop system \dref{system1.2} besides the usual
structural one (i.e., exact observability). Secondly, if the stochastic total disturbance is state-independent or only the state
is estimated, it can be easily obtained from  the following proofs of main results that
we can get rid of this assumption, or refer to, e.g., \cite{zhaofrac}. Thirdly,
the state of many practical control systems is bounded like those in faults
diagnosis \cite{zhaofrac}.
Finally, ESO is the key component designed for the active anti-disturbance control objective,
so when the ESO-based feedback control is designed, i.e., estimation and control are performed simultaneously,
Assumption (A3) is not required because the closed-loop state is bounded in mean square sense, see for instance \cite{wu1}.
This topic will be further researched in the subsequent paper.
 \end{remark}

 Let  $Q\in \mathbb{R}^{(n+1)\times (n+1)}$ be the unique positive definite matrix solution of
the Lyapunov equation $QG+G^{\top}Q=-I_{n+1}$.
The mean square and almost sure convergence of the linear event-triggered ESO \dref{linearobserver}  under \dref{lineartriggermechemi}
 is summarized as the following Theorem \ref{theorem3.1}.
\begin{theorem}\label{theorem3.1}
Suppose that Assumptions (A1)-(A3) hold and let
$\tau=\epsilon r^{-(n+\frac{1}{2})}$ for any free tuning parameter $\epsilon>0$.
Then, for any initial values $x(0)\in \mathbb{R}^{n},\hat{x}(0)\in \mathbb{R}^{n+1}$, $v_{2}(0)\in \mathbb{R}$, $T>0$,
 $r\geq r^{*}:=\max\{1,\frac{\lambda^{2}_{\max}(Q)}{\zeta}\}$ with $\zeta$ being any constant satisfying $0<\zeta<1$ and all $i=1,\cdots,n+1$, the estimation errors of the linear event-triggered ESO \dref{linearobserver}  under \dref{lineartriggermechemi} satisfy
\begin{equation}\label{estimationerror}
\hspace{-1.9cm}\mbox{(i)}\;\;\;\;\;\mathbb{E}|x_{i}(t)-\hat{x}_{i}(t)|^2\leq \frac{\Theta}{r^{2n+3-2i}},
\end{equation}
uniformly in $t\in [T,\infty)$, where $\Theta>0$ is a constant independent of  $r$;
\begin{eqnarray}\label{df34e}
\hspace{-1.3cm}\mbox{(ii)}\;\;\;|x_{i}(t)-\hat{x}_{i}(t)|\leq \frac{\Theta_{\omega}}{r^{n+\frac{3}{2}-i}} \;\; \mbox{a.s.}
\end{eqnarray}
uniformly in $t\in [T,\infty)$, where $\Theta_{\omega}>0$ is a random variable independent of  $r$.
\end{theorem}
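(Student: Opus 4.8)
The plan is to analyze the scaled estimation error in order to obtain a differential inequality for a suitable Lyapunov functional, then convert the resulting mean-square bound into the almost-sure statement via a Borel--Cantelli / continuity argument. First I would introduce the scaled errors
\[
\eta_{i}(t):=r^{\,n+1-i}\bigl(x_{i}(t)-\hat x_{i}(t)\bigr),\qquad i=1,\dots,n+1,
\]
so that $\eta:=(\eta_{1},\dots,\eta_{n+1})$ satisfies, on each interval $[t_{k},t_{k+1})$, an equation of the form $\dot\eta(t)=rG\eta(t)+B_{1}\,\Delta_{k}(t)+\tfrac{1}{r^{\,?}}e_{n+1}\bigl(\dot x_{n+1}(t)\bigr)$, where $\Delta_{k}(t):=r^{\,n+1}\,a\,(y(t_{k})-y(t))$ is the triggering-induced perturbation with $a=(a_1,\dots,a_{n+1})^{\top}$, and the last term carries the derivative of the total disturbance. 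Because $G$ is Hurwitz with Lyapunov matrix $Q$, I take $V(\eta)=\eta^{\top}Q\eta$ and compute $\mathcal LV$ along the Itô dynamics (here one must keep in mind the Remark: only $\mathbb E\mathcal LV=\tfrac{d}{dt}\mathbb EV$ is meaningful because of the random coefficients). The quadratic part gives $-r\|\eta\|^{2}$ up to the trace term from the colored-noise diffusion, which is $O(1)$ and can be absorbed once $r$ is large; the cross terms with $\Delta_{k}(t)$ and with $\dot x_{n+1}$ are handled by Young's inequality.

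The crucial estimate is the bound on $\Delta_{k}(t)$. On $[t_{k},t_{k+1})$ the triggering rule \dref{lineartriggermechemi} does \emph{not} directly control $|y(t)-y(t_{k})|$ on the dwell-time sub-interval $[t_{k},t_{k}+\tau)$, so I would bound $|y(t)-y(t_{k})|$ there by integrating $\dot y=\dot x_{1}=x_{2}$ and using Assumption (A3): $\mathbb E|y(t)-y(t_{k})|^{2}\le \tau^{2}\sup_{s}\mathbb E|x_{2}(s)|^{2}\le M\tau^{2}=M\epsilon^{2}r^{-(2n+1)}$, which after scaling by $r^{\,n+1}$ yields $\mathbb E|\Delta_{k}(t)|^{2}=O(r)$ — exactly the size needed so that the Young cross-term $\tfrac{1}{2}r^{-1}\|Q B_1\|^2|\Delta_k|^2 + \tfrac{r}{2}\|\eta\|^2$ is dominated. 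On $[t_{k}+\tau,t_{k+1})$ the triggering threshold gives $|y(t)-y(t_{k})|\le\theta r^{-(n+1/2)}$ deterministically, so $|\Delta_{k}(t)|\le \theta\|a\| r^{1/2}$, again $O(r^{1/2})$ in sup-norm, comfortably smaller. The term involving $\dot x_{n+1}(t)=\partial_t f+\sum_i\partial_{x_i}f\,\dot x_i+\partial_{v_1}f\,\dot v_1+\partial_{v_2}f\,\dot v_2$ is where Assumptions (A1)--(A2) enter: applying Itô to the colored-noise part and using the stated bounds on $f$, $\sigma$ and their derivatives, together with $\mathbb E\|x\|^{2}+\mathbb E|u|^{2}\le M$ and the boundedness of $\mathbb E|v_2(t)|^{2}$ (Ornstein--Uhlenbeck), one gets $\mathbb E|\mathcal L\text{-term}|$ of order $O(1)$ uniformly in $t$, scaled down by $r^{-1}$ from the $e_{n+1}$ entry. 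Assembling these gives a differential inequality $\tfrac{d}{dt}\mathbb EV(\eta(t))\le -c r\,\mathbb EV(\eta(t))+C$ for some constants $c>0$, $C>0$ independent of $r$ (valid for $r\ge r^{*}$, with the condition $r\ge \lambda_{\max}^2(Q)/\zeta$ ensuring the dissipation rate survives after absorbing the $O(1)$ noise trace), whence $\mathbb EV(\eta(t))\le \mathbb EV(\eta(0))e^{-crt}+C/(cr)$. Since $\mathbb EV(\eta(0))$ blows up like $r^{2n}$, one needs the transient $e^{-crt}$ to kill it by time $T$; because $r e^{-crT}\to 0$, for every fixed $T>0$ there is $r$ large enough that $r^{2n}e^{-crT}\le 1$, giving $\mathbb EV(\eta(t))\le \Theta/r$ uniformly on $[T,\infty)$, and unpacking $V$ and the scaling yields (i).

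For (ii) I would use the standard route: from the integral form of the error equation plus the mean-square bound in (i), obtain that the scaled error process has uniformly (in $t$, over compact $r$-independent windows) bounded moments and a modulus-of-continuity estimate, then apply Chebyshev on a discrete time grid together with Borel--Cantelli to get $\sup_{t\ge T}\|\eta(t)\|\le \Theta_\omega$ almost surely for a finite random variable $\Theta_\omega$; alternatively one invokes a supermartingale-type argument on $V(\eta(t))e^{crt}$ after $T$. Rescaling back gives \dref{df34e}. The main obstacle I anticipate is the careful treatment of the dwell-time sub-interval $[t_k,t_k+\tau)$ where the event-threshold bound is unavailable: one must show the ``free-running'' growth of $|y(t)-y(t_k)|$ over a window of length $\tau=\epsilon r^{-(n+1/2)}$ is still small enough after the $r^{n+1}$ scaling, and this is precisely why $\tau$ is chosen proportional to $r^{-(n+1/2)}$ rather than being a fixed constant. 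A secondary technical point is justifying that the stochastic integral $\int_0^t \tfrac{\partial V}{\partial \eta}g\,dB$ is a genuine martingale (not merely a local one) so that $\mathbb E\mathcal LV=\tfrac{d}{dt}\mathbb EV$ holds — this follows from the linear-growth/boundedness bounds once one checks the relevant $L^2$ integrability of the integrand on each finite horizon.
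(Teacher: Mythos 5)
Your proposal is correct and follows essentially the same route as the paper's proof: the same scaled error $\eta_{i}(t)=r^{n+1-i}(x_{i}(t)-\hat{x}_{i}(t))$, the same Lyapunov function $V(\eta)=\eta Q\eta^{\top}$ with $QG+G^{\top}Q=-I_{n+1}$, the same splitting of each inter-execution interval into the dwell-time part (controlled by Assumption (A3) together with the choice $\tau=\epsilon r^{-(n+\frac{1}{2})}$) and the post-dwell part (controlled by the triggering threshold), Young's inequality and Gronwall, the decaying transient $e^{-\xi_{0}rT}$ absorbing the $r$-growing initial data, and Chebyshev plus Borel--Cantelli for the almost-sure statement. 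The only cosmetic difference is in part (ii), where the paper applies Chebyshev and Borel--Cantelli directly to the uniform-in-$t$ mean-square bound rather than passing through a time grid and modulus-of-continuity estimate as you suggest.
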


\begin{proof}
See  ``Proof of Theorem \ref{theorem3.1}" in Appendix A.
\end{proof}

In order to guarantee the convergence of the nonlinear event-triggered ESO \dref{nonlinearESO}  under \dref{nonlineartrigger},
the  boundedness of state is required to be slightly stronger than the one in Assumption (A3).

{\bf  Assumption (A4).} There exists some  known positive constant $N$ such that
the solution $x(t)$ and input $u(t)$ of system \dref{system1.2}
satisfies $\mathbb{E}\|x(t)\|^{p}+\mathbb{E}|u(t)|^{2}\leq N,\; \forall t\geq 0$,
where $p$ is any constant satisfying $p>2$.

 The almost sure convergence of the nonlinear event-triggered ESO \dref{nonlinearESO} under \dref{nonlineartrigger} is summarized as the following Theorem \ref{theorem3.2}.
\begin{theorem}\label{theorem3.2}
Suppose that Assumptions (A1), (A2) and (A4) hold, and let $\nu\in (\max\{1-\frac{p-2}{(p-2)n+p+1},
1-\frac{1}{2n-1}\},1)$ and
$\tau^{*}=\epsilon^{*}r^{-(n+\frac{1}{n\nu-(n-1)})}$ for any free tuning parameter $\epsilon^{*}>0$.
Then, there exists some $r^{*}\geq 1$ such that for any initial values $x(0)\in \mathbb{R}^{n},\hat{x}(0)\in \mathbb{R}^{n+1}$, $v_{2}(0)\in \mathbb{R}$,
 $r\geq r^{*}$ and all $i=1,\cdots,n+1$, the estimation errors of the nonlinear event-triggered ESO \dref{nonlinearESO} under \dref{nonlineartrigger} satisfy
\begin{eqnarray}\label{3fdfd}
|x_{i}(t)-\hat{x}_{i}(t)|\leq \frac{\Xi_{i,\omega}}{r^{n+1+\frac{(i-1)\nu-(i-2)}{\mu-1+\nu}-i}}\;\; \mbox{a.s.}
\end{eqnarray}
uniformly  in  $t\in [\varpi,\infty)$  , where $\varpi\geq 0$ is a random variable satisfying
$\mathbb{E}\varpi\leq T_{r}$ for some $r$-dependent positive constant $T_{r}$, $\Xi_{i,\omega}$'s are positive random variables independent of $r$, and
$\mu$ is any constant satisfying $\disp \max\{1,2[n\nu-(n-1)]\} <\mu <(2n-1)\nu-2n+3$.
\end{theorem}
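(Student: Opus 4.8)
The plan is to reduce Theorem \ref{theorem3.2} to a Lyapunov analysis of a \emph{time-scaled, perturbed} copy of the finite-time-stable homogeneous system $\dot{\vartheta}=\Phi(\vartheta)$ from Lemmas \ref{lemma2.2}--\ref{lemma2.3}, the two perturbations being the triggering error created by \eqref{nonlineartrigger} and the (It\^{o}) time derivative of the stochastic total disturbance $x_{n+1}(t)=f(t,x(t),v_{1}(t),v_{2}(t))$. First I would introduce the scaled estimation error $\eta_{i}(t):=r^{\,n+1-i}\bigl(x_{i}(t)-\hat{x}_{i}(t)\bigr)$, $i=1,\dots,n+1$, and the scaled triggering error $\varsigma_{k}(t):=r^{\,n}\bigl(y(t_{k})-y(t)\bigr)$, which by \eqref{nonlineartrigger} satisfies $|\varsigma_{k}(t)|\le\theta^{*}r^{-1/(n\nu-(n-1))}$ on each inter-event interval $[t_{k},t_{k+1})$ (these intervals being nonempty since $\tau^{*}>0$, so that $\eta(t)$ is defined for all $t\ge0$). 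A direct computation from \eqref{system1.2}--\eqref{nonlinearESO} shows that $u(t)$ cancels in the first $n$ error equations and that, with $e_{1}:=(1,0_{n})^{\top}$ and $b:=(0_{n},1)^{\top}$,
\begin{equation*}
\dot{\eta}(t)=r\,\Phi(\eta(t))+r\bigl[\Phi(\eta(t)+\varsigma_{k}(t)e_{1})-\Phi(\eta(t))\bigr]+b\,\dot{x}_{n+1}(t),
\end{equation*}
where $\dot{x}_{n+1}(t)$ is read in the It\^{o} sense: applying the multi-dimensional It\^{o} formula to $f(t,x(t),v_{1}(t),v_{2}(t))$ with $v_{1}(t)=\sigma(t,B_{1}(t))$ and $v_{2}$ from \eqref{21equ} gives $dx_{n+1}(t)=\Delta(t)\,dt+\delta_{1}(t)\,dB_{1}(t)+\delta_{2}(t)\,dB_{2}(t)$, and Assumptions (A1), (A2) bound $|\Delta(t)|$ and $|\delta_{1}(t)|$ by affine functions of $\|x(t)\|,\|v_{2}(t)\|,|u(t)|$ and $|\delta_{2}(t)|$ by a constant; with Assumption (A4) and the bounded moments of the Ornstein--Uhlenbeck process $v_{2}$ this makes the relevant moments of $\Delta,\delta_{1},\delta_{2}$ bounded uniformly in $t$.

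Since $G$ is Hurwitz, $\dot{\vartheta}=\Phi(\vartheta)$ is globally finite-time stable and homogeneous of degree $\nu-1$ with weights $w_{l}=(l-1)\nu-(l-2)$ (Lemmas \ref{lemma2.2}--\ref{lemma2.3}); as the hypothesis $\mu>\max_{l}\{1-\nu,w_{l}\}=1$ holds (indeed $\mu>\max\{1,2[n\nu-(n-1)]\}$), Lemma \ref{fdf} supplies a positive definite, radially unbounded $W\in C^{1}(\mathbb{R}^{n+1};\mathbb{R})\cap C^{\infty}(\mathbb{R}^{n+1}\setminus\{0_{n+1}\};\mathbb{R})$, homogeneous of degree $\mu$, with $L_{\Phi}W$ negative definite and homogeneous of degree $\mu-1+\nu$. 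Applying Lemma \ref{lemma2.1} to the pairs $(W,-L_{\Phi}W)$, $(W,\partial W/\partial\eta_{j})$ and $(W,\partial^{2}W/\partial\eta_{n+1}^{2})$ (using $\mu>2[n\nu-(n-1)]$ so that $\partial^{2}W/\partial\eta_{n+1}^{2}$ has nonnegative homogeneity degree and stays bounded near $0_{n+1}$), together with the elementary bound $|\langle a+b\rangle^{s}-\langle a\rangle^{s}|\le 2|b|^{s}$ for $s\in(0,1]$ and $|\varsigma_{k}(t)|^{\,j\nu-(j-1)}\le(\theta^{*})^{\,j\nu-(j-1)}r^{-(j\nu-(j-1))/(n\nu-(n-1))}$, I would obtain along the error process, for constants $c_{1},c_{2}>0$ and $\gamma:=\tfrac{\mu-1+\nu}{\mu}\in(0,1)$,
\begin{equation*}
\mathcal{L}W(\eta(t))\le -c_{1}r\,\bigl(W(\eta(t))\bigr)^{\gamma}+c_{2}\bigl(1+|\Delta(t)|^{\rho_{1}}+|\delta_{1}(t)|^{\rho_{2}}+|u(t)|^{2}\bigr),
\end{equation*}
for suitable exponents $\rho_{1},\rho_{2}$ coming from Young's inequality balanced against the homogeneity degrees; here the choice of the exponent $n+\tfrac{1}{n\nu-(n-1)}$ in \eqref{nonlineartrigger} is what renders the triggering perturbation of subordinate order in $r$, $\nu>1-\tfrac{1}{2n-1}$ is what makes the It\^{o}-correction term absorbable, and $\nu>1-\tfrac{p-2}{(p-2)n+p+1}$ is exactly the condition allowing $\mu$ to be chosen in its admissible range $(\max\{1,2[n\nu-(n-1)]\},(2n-1)\nu-2n+3)$ with $\rho_{1},\rho_{2}\le p$, so that $\mathbb{E}\bigl(1+|\Delta(t)|^{\rho_{1}}+|\delta_{1}(t)|^{\rho_{2}}+|u(t)|^{2}\bigr)\le C$ uniformly in $t$ by Assumptions (A1), (A2), (A4); it is the merely $L^{2}$-controlled term $|u(t)|^{2}$, and the ensuing moment bookkeeping, that forces only almost sure (not mean square) convergence to be asserted.

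Finally, from this inequality I would extract the two parts of the conclusion. Put $\Lambda(t):=c_{2}\bigl(1+|\Delta(t)|^{\rho_{1}}+|\delta_{1}(t)|^{\rho_{2}}+|u(t)|^{2}\bigr)$, so $\mathbb{E}\Lambda(t)\le C$ uniformly in $t$. Since $\gamma<1$, wherever $W(\eta(t))$ exceeds $(2\Lambda(t)/(c_{1}r))^{1/\gamma}$ the drift satisfies $\mathcal{L}W(\eta(t))\le-\tfrac{c_{1}}{2}r(W(\eta(t)))^{\gamma}$, so the ``clean'' part drives $W(\eta(t))$ into a ball $\{W\le \Xi_{\omega}r^{-\mu/(\mu-1+\nu)}\}$ in finite time; taking expectations, using $\mathbb{E}\mathcal{L}W(\eta(t))=\tfrac{d}{dt}\mathbb{E}W(\eta(t))$ (the It\^{o} integral being a martingale on finite intervals, after a standard localization) and comparing with the scalar equation $\dot z=-\tfrac{c_{1}}{2}rz^{\gamma}+C$ bounds the expectation of the first entry time $\varpi$ of that ball by an $r$-dependent constant $T_{r}$. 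After $\varpi$, the negative drift together with the almost sure finiteness of the quadratic variation of the martingale part $\int_{0}^{t}\tfrac{\partial W(\eta(s))}{\partial\eta_{n+1}}\bigl(\delta_{1}(s)\,dB_{1}(s)+\delta_{2}(s)\,dB_{2}(s)\bigr)$ (hence its almost sure boundedness) and a Borel--Cantelli argument over unit time intervals keep $W(\eta(t))$ bounded, for all $t\ge\varpi$, by a path-dependent but almost surely finite multiple of $r^{-\mu/(\mu-1+\nu)}$; unscaling via $|\eta_{i}(t)|^{\mu/w_{i}}\le c\,W(\eta(t))$ (Lemma \ref{lemma2.1}) and $\eta_{i}=r^{\,n+1-i}(x_{i}-\hat{x}_{i})$ turns this into \eqref{3fdfd}, using $\tfrac{w_{i}}{\mu}\cdot\tfrac{\mu}{\mu-1+\nu}=\tfrac{w_{i}}{\mu-1+\nu}$ and $n+1-i+\tfrac{w_{i}}{\mu-1+\nu}=n+1+\tfrac{(i-1)\nu-(i-2)}{\mu-1+\nu}-i$.

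I expect the last step to be the main obstacle: making the finite-time-convergence/confinement argument rigorous for the sample-path statement when the disturbance is only $L^{p}$-integrable (and $u$ only $L^{2}$), i.e.\ controlling the random excursions of $W(\eta(t))$ out of the nominal error ball and proving $\mathbb{E}\varpi<\infty$; this is precisely where the homogeneity degree $\mu$, the fractional power $\nu$ and the moment exponent $p$ must be kept in balance, which is the origin of the two lower bounds on $\nu$ in the statement. A secondary technical point is the non-Lipschitz nature of $\Phi$ and the loss of $C^{2}$-smoothness of $W$ at $0_{n+1}$: the It\^{o} formula for $W(\eta(t))$ and all the increment estimates above must be justified on the event $\{\eta(t)\ne 0_{n+1}\}$, with a separate elementary argument for passages through the origin, and the error process itself must be shown to be well defined on each $[t_{k},t_{k+1})$.
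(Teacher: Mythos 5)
Your outline reproduces the paper's strategy almost exactly up to the endgame: the same scaled errors $\eta_i=r^{n+1-i}(x_i-\hat x_i)$, the same treatment of the triggering perturbation through the increment of $\langle\cdot\rangle^{i\nu-(i-1)}$ bounded by $2^{1-\vartheta}|\kappa|^{\vartheta}$, the same It\^{o} expansion of $x_{n+1}$, and the same homogeneous Lyapunov function $V$ of degree $\mu$ from Rosier's theorem together with the comparison bounds of Lemma \ref{lemma2.1}, leading to $\mathcal{L}V(\eta(t))\le -\tfrac{c_1 r}{2}(V(\eta(t)))^{(\mu-d_0)/\mu}+\Gamma_1+\sum_{i=2}^4\Gamma_i(t)$ with $d_0=1-\nu$ and $\sup_t\mathbb{E}[\Gamma_i(t)]^{\mu/(\mu-d_0)}<\infty$; the roles you assign to the two lower bounds on $\nu$ and to the admissible range of $\mu$ are the ones they actually play.

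The two sub-steps you flag as the "main obstacle" are, however, genuine gaps as you propose to carry them out. First, $\mathbb{E}\varpi\le T_r$ cannot be obtained by "taking expectations and comparing with $\dot z=-\tfrac{c_1}{2}rz^{\gamma}+C$": since $\gamma=(\mu-d_0)/\mu<1$, Jensen gives $\mathbb{E}[V^{\gamma}]\le(\mathbb{E}V)^{\gamma}$, which is the wrong direction for turning $\frac{d}{dt}\mathbb{E}V\le -c r\,\mathbb{E}[V^{\gamma}]+C$ into a closed differential inequality for $\mathbb{E}V$ (mass of $V$ concentrated on a small-probability set makes $\mathbb{E}[V^{\gamma}]$ much smaller than $(\mathbb{E}V)^{\gamma}$). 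The paper avoids this with the concave transform $\Upsilon(V)=\frac{4}{c_1r}\int_0^{V}\vartheta^{-(\mu-d_0)/\mu}d\vartheta$: before a localized version of $\varpi$ one has $\mathcal{L}\Upsilon(V(\eta(t)))\le-1$ (the second-order It\^{o} correction enters with a favorable sign because $\Upsilon''<0$), and Dynkin's formula plus Fatou yield $\mathbb{E}\varpi\le\frac{4\mu}{c_1d_0r}\mathbb{E}[V(\eta(0))]^{d_0/\mu}$. Second, your pathwise confinement after $\varpi$ relies on the almost sure finiteness of the quadratic variation of $\int_0^{t}\frac{\partial V(\eta(s))}{\partial\eta_{n+1}}(\Delta_2(s)dB_1(s)+\Delta_3(s)dB_2(s))$ over an infinite horizon; this is not available, since $\Delta_3$ is only bounded (not decaying) and $\partial V/\partial\eta_{n+1}$ need not vanish along the path, so the martingale part need not be a.s. bounded on $[\varpi,\infty)$. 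The paper never attempts a pathwise confinement: it shows $\mathbb{E}V(\eta(t))\le\mathbb{E}\big[\tfrac{4}{c_1r}(\Gamma_1+\sum_{i=2}^4\Gamma_i(t))\big]^{\mu/(\mu-d_0)}$ for all $t\ge\varpi$, deduces the first-moment bound $\mathbb{E}|x_i(t)-\hat x_i(t)|\le\Xi_i r^{-(n+1+w_i/(\mu-d_0)-i)}$, and only then passes to the almost-sure statement \dref{3fdfd} via Chebyshev's inequality with threshold $m^2\Xi_i r^{-(\cdots)}$ and Borel--Cantelli in the integer parameter $m$, which is precisely what produces the random, $r$-independent constant $\Xi_{i,\omega}=m_0^2(\omega)\Xi_i$. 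Replacing your two final sub-steps by these arguments would bring your proposal in line with the paper's proof.
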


\begin{proof}
See  ``Proof of Theorem \ref{theorem3.2}" in Appendix B.
\end{proof}

\begin{remark}\label{36fdf}
There are some conclusions that can be obtained directly from Theorem \ref{theorem3.1} and  Theorem \ref{theorem3.2}. Firstly, there is an anti-correlation
between the estimation errors and the tuning gain $r$. That is, the larger the tuning gain $r$ becomes,
the smaller the estimation errors are. Secondly, the estimation performance can be ensured in the transient sense, i.e., the estimation
errors become small after a time instant (may be stochastic). Finally, it can be obtained that $\frac{(i-1)\nu-(i-2)}{\mu-1+\nu}>\frac{1}{2}$ for
all $i=1,\cdots,n+1$,
thus it is seen from \dref{df34e} and \dref{3fdfd} that the estimation accuracy of the nonlinear event-triggered ESO  \dref{nonlinearESO} under \dref{nonlineartrigger}
is higher than the linear event-triggered ESO \dref{linearobserver}  under \dref{lineartriggermechemi}. However,
it is seen from
$\frac{1}{n\nu-(n-1)}>\frac{1}{2}$
that the triggering frequency required for
the  nonlinear event-triggered ESO \dref{nonlinearESO} under \dref{nonlineartrigger}  is higher than the one required for the linear
event-triggered ESO \dref{linearobserver}  under \dref{lineartriggermechemi}, which will also be illustrated by the following numerical simulations.
\end{remark}

\section{Numerical simulations}
In this section, to illustrate the effectiveness of the linear and nonlinear event-triggered ESOs, we take the following second-order uncertain stochastic systems as an numerical example:
\begin{equation}\label{systemsexam}
\left\{\begin{array}{l} \dot{x}_{1}(t)=x_{2}(t), \cr
\dot{x}_{2}(t)=-b_{1}x_{1}(t)-b_{2}x_{2}(t)+b_{3}\sin(b_{4}x_{1}(t)+b_{5}x_{2}(t))\cr
\hspace{1.2cm}+b_{6}\cos(b_{7}t+b_{8}B_{1}(t))+b_{9}v_{2}(t)+u(t),\cr
y(t)=x_{1}(t),
\end{array}\right.
\end{equation}
where $u(t)=\cos(b_{10}t)$, and $b_{i}\;(i=1,\cdots,10)$ are all unknown parameters with $b_{1},b_{2}>0$.
The stochastic total disturbance is $x_{3}(t):=-b_{1}x_{1}(t)-b_{2}x_{2}(t)+b_{3}\sin(b_{4}x_{1}(t)+b_{5}x_{2}(t))+b_{6}\cos(b_{7}t+b_{8}B_{1}(t))+b_{9}v_{2}(t)$.
It can be easily checked that Assumptions (A1-A4) are all satisfied.
For systems \dref{systemsexam}, the linear event-triggered ESO \dref{linearobserver}  under \dref{lineartriggermechemi}
is designed with $n=2$, $a_{1}=a_{2}=3$, $a_{3}=1$, $r=15$, $\epsilon=\theta=1$ and $\tau=15^{-2.5}$ as required;
The  nonlinear event-triggered ESO \dref{nonlinearESO} under \dref{nonlineartrigger} is designed with
$n=2$, $a_{1}=a_{2}=3$, $a_{3}=1$, $r=15$, $\epsilon^{*}=\theta^{*}=1$,  $p=3,\nu=\frac{6}{7}$ and
$\tau^{*}=15^{-\frac{17}{5}}$ as required. To observe and compare the estimation effects, initial values and unknown parameters are specified as
$x_{1}(0)=1,x_{2}(0)=-1,v_{2}(0)=0,\hat{x}_{1}(0)=\hat{x}_{2}(0)=\hat{x}_{3}(0)=0,$ $b_{1}=b_{2}=2$,
$b_{i}=1.5\;(i=3,\cdots,6)$, $b_{i}=2.5\;(i=7,\cdots,10)$, and $\alpha_{1}=\alpha_{2}=2$ in \dref{21equ}.
It is observed from Figure \ref{figure1linear} and Figure \ref{figure2nonlinear} that both linear and nonlinear
event-triggered ESOs have good estimation effects for $(x_{2}(t),x_{3}(t))$, while the estimation accuracy of the nonlinear event-triggered ESO is more satisfactory
than the linear counterpart. It is seen from Figure \ref{figure3tr} that the
 inter-execution times corresponding to ETM \dref{lineartriggermechemi} of the linear event-triggered ESO
 are overall larger than those corresponding to the nonlinear event-triggered ESO, where the number of execution times
 during $[0,20]$ is 121 and 1354, respectively. These are consistent with
 the main results as stated in Remark \ref{36fdf}.

\begin{figure}[ht]\centering
\subfigure[]
 {\includegraphics[width=4.3cm,height=3cm]{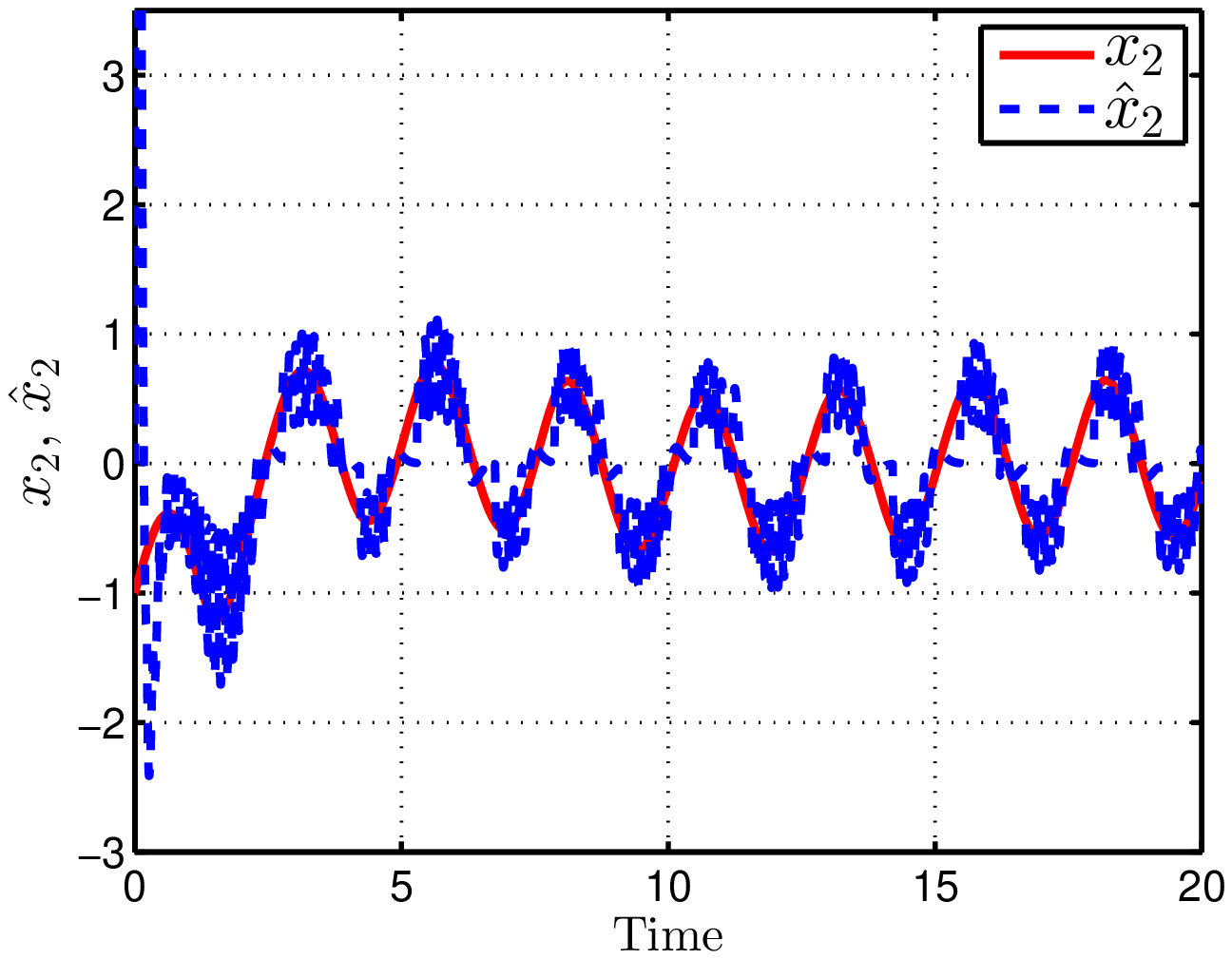}}
\subfigure[]
 {\includegraphics[width=4.3cm,height=3cm]{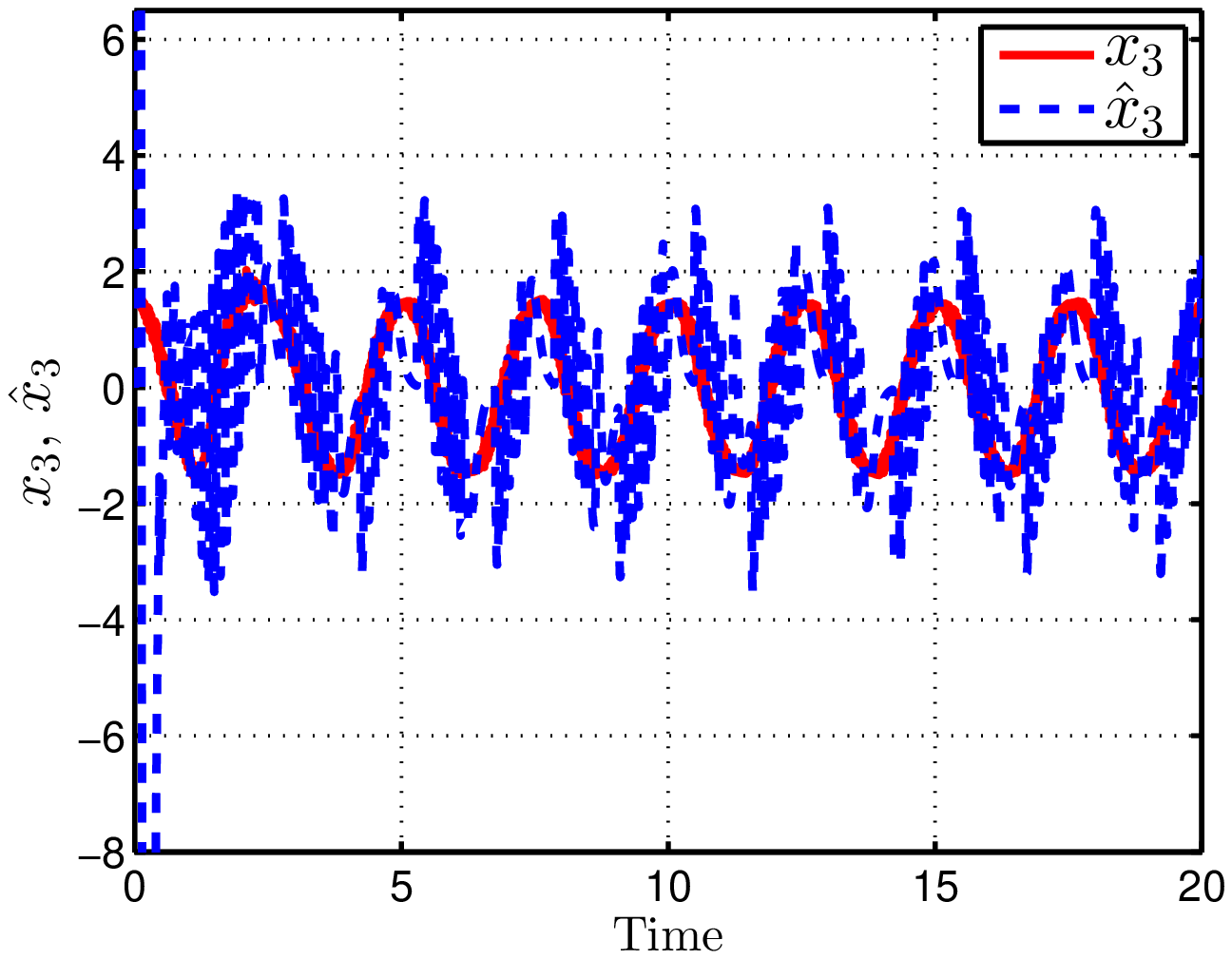}}
\caption{Estimation of $(x_{2}(t),x_{3}(t))$ by the linear event-triggered ESO.}\label{figure1linear}
\end{figure}

\begin{figure}[ht]\centering
\subfigure[]
 {\includegraphics[width=4.3cm,height=3cm]{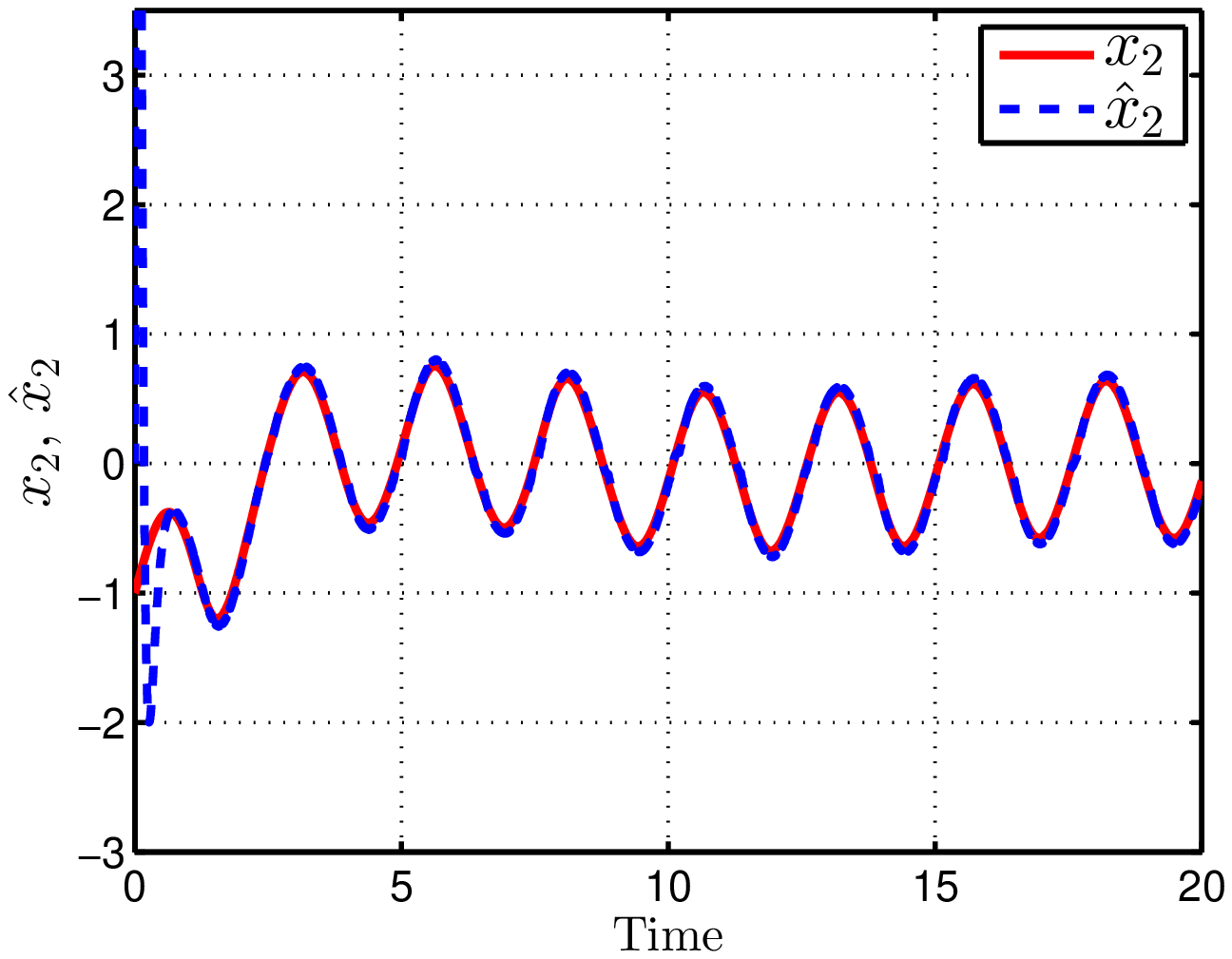}}
\subfigure[]
 {\includegraphics[width=4.3cm,height=3cm]{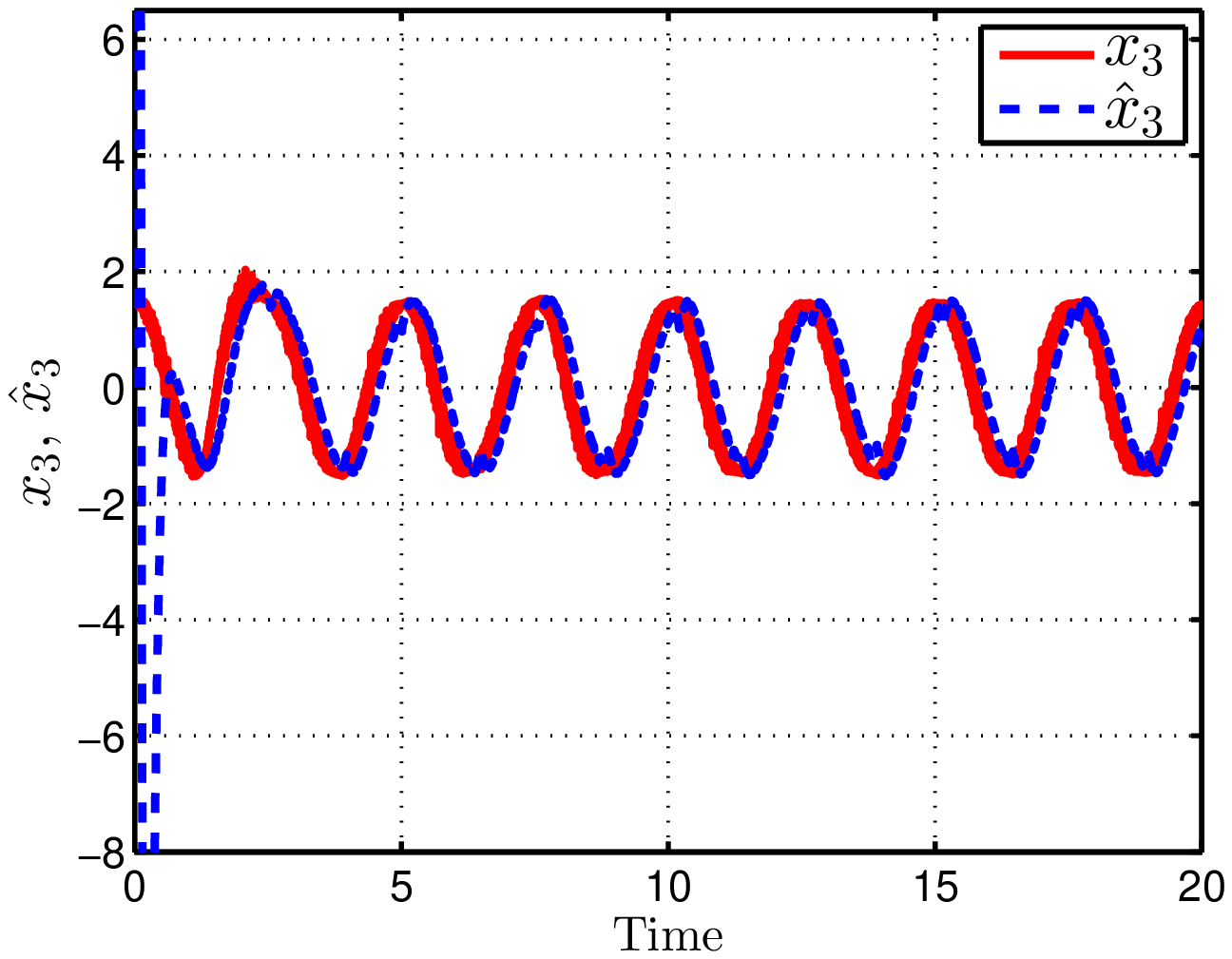}}
\caption{Estimation of $(x_{2}(t),x_{3}(t))$ by the nonlinear event-triggered ESO.}\label{figure2nonlinear}
\end{figure}

\begin{figure}[ht]\centering
\subfigure[Corresponding to ETM \dref{lineartriggermechemi}]
 {\includegraphics[width=4.3cm,height=3cm]{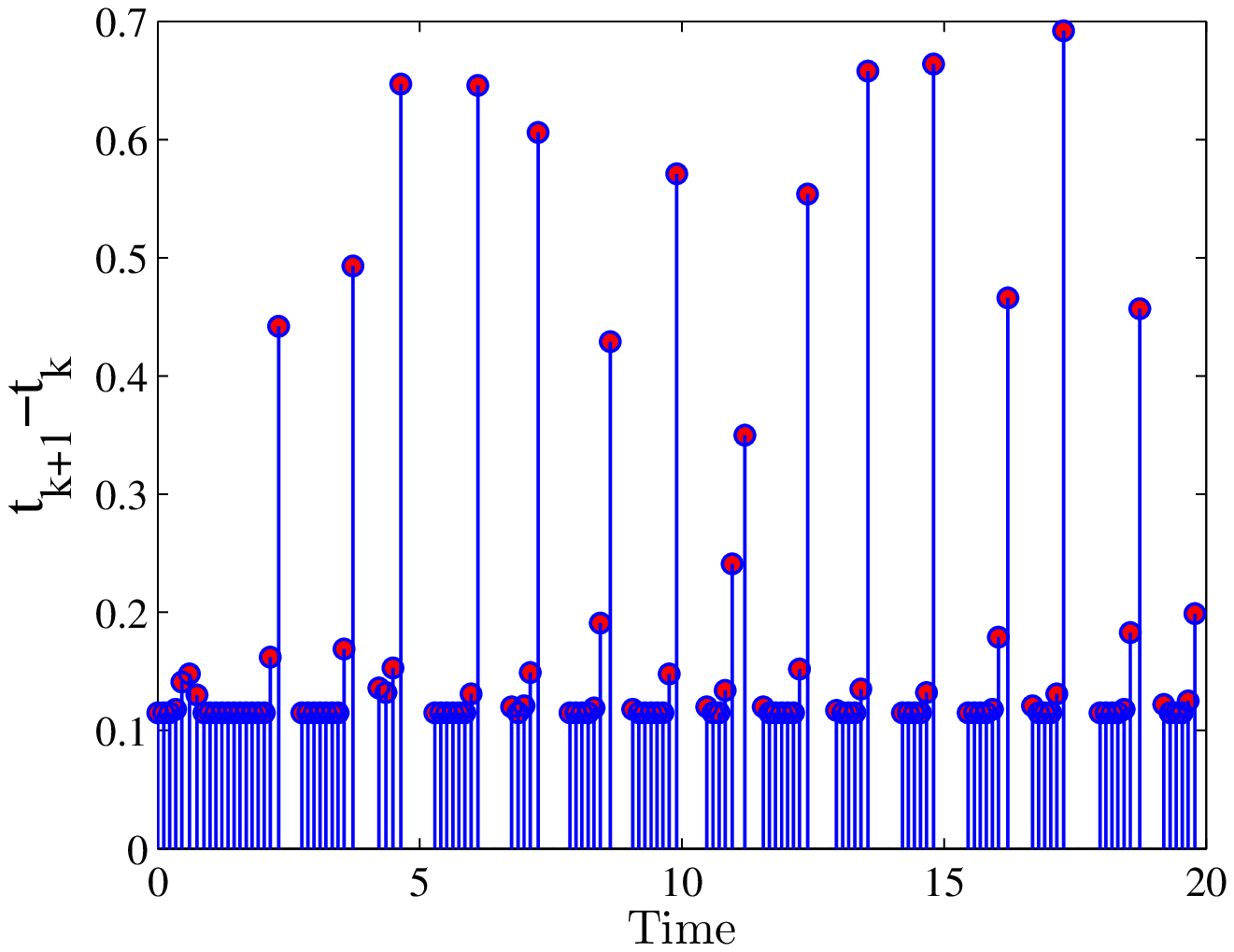}}
\subfigure[Corresponding to ETM \dref{nonlineartrigger}]
 {\includegraphics[width=4.3cm,height=3cm]{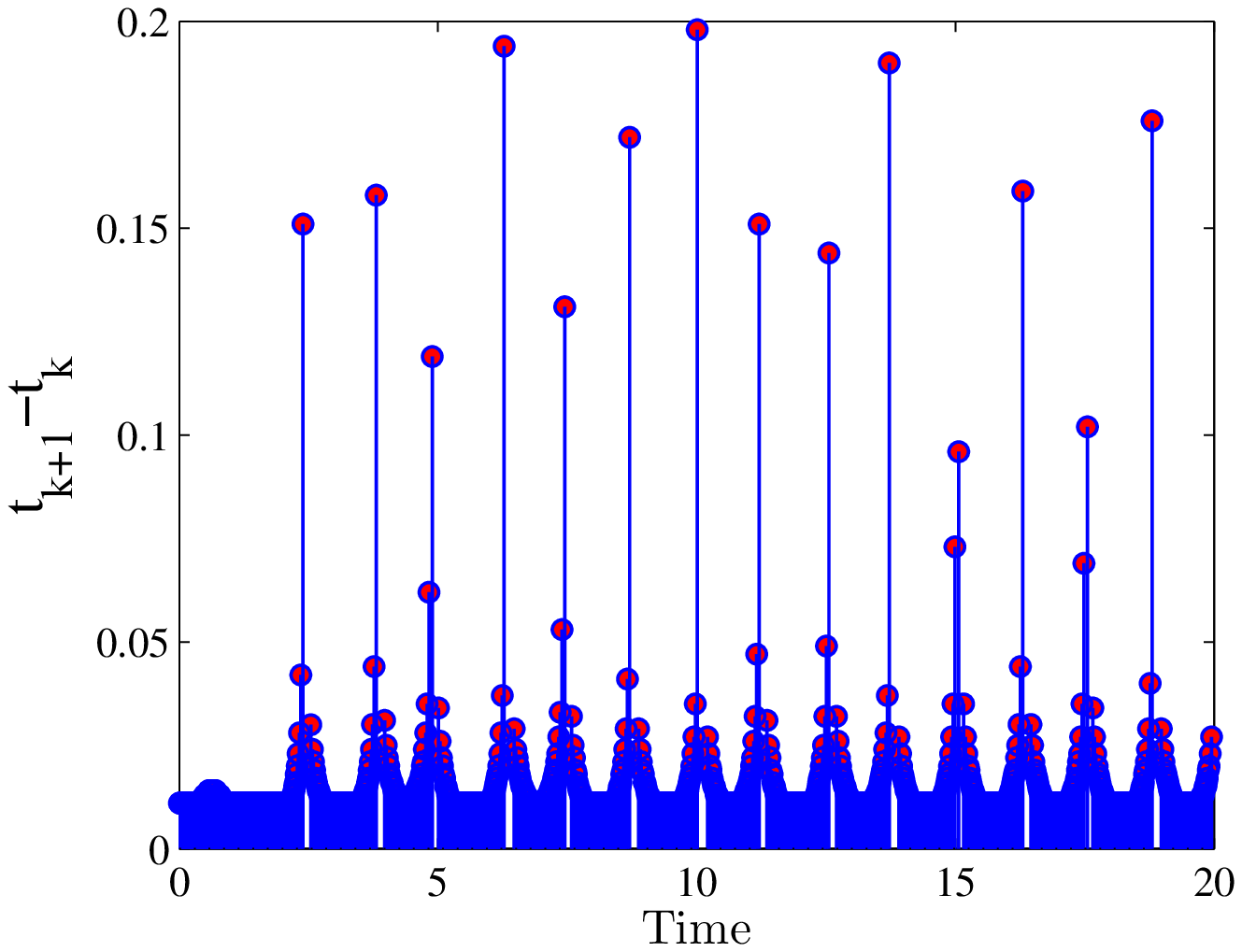}}
\caption{Inter-execution times corresponding to ETMs \dref{lineartriggermechemi} and \dref{nonlineartrigger}
for respective linear and nonlinear event-triggered ESOs.}\label{figure3tr}
\end{figure}

\section{Concluding remarks}\label{Se5}
This paper investigates both the linear and nonlinear event-triggered
extended state observers (ESOs) for a class of uncertain stochastic systems
driven by bounded and colored noises. Two event-triggering mechanisms with dwell time
are proposed for the designs of the event-triggered ESOs, which guarantee a positive minimum inter-event time for
every sample path solution of the stochastic systems to avoid directly the Zeno phenomenon.
Not only the mean square convergence but also the almost sure one of the estimation errors
of  unmeasured state and stochastic total disturbance
 are given with rigorous theoretical proofs.
The theoretical results also show that the nonlinear event-triggered ESO has higher estimation accuracy
but higher triggering frequency than the linear one. A successive interesting problem
to be further developed could be the event-triggered
active disturbance rejection control (ADRC) for the uncertain stochastic systems based on the  event-triggered ESOs.

\begin{center}
{\bf  APPENDIX A: Proof of Theorem \ref{theorem3.1}}
\end{center}


Define error variables as follows: $\eta_{i}(t)=r^{n+1-i}(x_{i}(t)-\hat{x}_{i}(t))\;(i=1,\cdots,n+1)$,
$\eta(t)=(\eta_{1}(t),\cdots,\eta_{n+1}(t))$.
For any fixed $t\in[0,\infty)$, the last execution time before $t$ can be expressed as
$\varrho_{t}=\max\{t_{k}:t_{k}\leq t, k\in \mathbb{Z}^{+}\}$.
Therefore, $y(t_{k})-y(t),\;t\in [t_{k},t_{k+1})$ can be expressed as $y(\varrho_{t})-y(t)$.
Set $\kappa(t)=r^{n}(y(\varrho_{t})-y(t))$.
Since $t_{k+1}-t_{k}\geq \tau$, for almost every sample path, there
are at most $[\frac{t}{\tau}]+1$ execution times before $t$. Define
$\Omega_{k}=\{\varrho_{t}=t_{k}\}, \; \Omega_{k,\tau}=\{\varrho_{t}=t_{k}\;\mbox{and}\; t\leq t_{k}+\tau\}, k=1,\cdots,[\frac{t}{\tau}]+1$.
It can be obtained that $\Omega$ can be expressed as the union of a set of mutually disjoint subset as
$\Omega=\bigcup^{[\frac{t}{\tau}]+1}_{k=1} \Omega_{k}$ for each $t\in[0,\infty)$.
Applying It\^{o}'s formula to $f(t,x(t),v_{1}(t),v_{2}(t))$ with respect to
$t$ along system \dref{system1.2} and \dref{21equ}, it is obtained that
\begin{eqnarray}\label{53fd}
&&\hspace{-0.7cm} dx_{n+1}(t)=\frac{\partial f(t,x(t),v_{1}(t),v_{2}(t))}{\partial
t}dt \cr&&\hspace{-0.7cm}+ \sum\limits^{n-1}_{i=1}\frac{\partial
f(t,x(t),v_{1}(t),v_{2}(t))}{\partial x_{i}}x_{i+1}(t)dt
  \cr  &&\hspace{-0.7cm}+\frac{\partial f(t,x(t),v_{1}(t),v_{2}(t))}{\partial
x_{n}}[f(t,x(t),v_{1}(t),v_{2}(t))+u(t)]dt \cr &&\hspace{-0.7cm}+\frac{\partial
f(t,x(t),v_{1}(t),v_{2}(t))}{\partial
v_{1}}[\frac{\partial \sigma(t,B_{1}(t))}{\partial t}+\frac{1}{2}\frac{\partial^{2} \sigma(t,B_{1}(t))}{\partial \varsigma^{2}}]dt \cr&&\hspace{-0.7cm}+
\frac{1}{2}\frac{\partial^{2}
f(t,x(t),v_{1}(t),v_{2}(t))}{\partial v^{2}_{1}}(\frac{\partial \sigma(t,B_{1}(t))}{\partial \varsigma})^{2}dt\cr&&\hspace{-0.7cm}-
 \frac{\partial f(t,x(t),v_{1}(t),v_{2}(t))}{\partial v_{2}}\alpha_{1}v_{2}(t)dt\cr&&\hspace{-0.7cm}+\frac{\partial^{2} f(t,x(t),v_{1}(t),v_{2}(t))}{\partial v^{2}_{2}}\alpha^{2}_{1}\alpha_{2}dt\cr&&\hspace{-0.7cm}
 +\frac{\partial f(t,x(t),v_{1}(t),v_{2}(t))}{\partial v_{1}}\frac{\partial \sigma(t,B_{1}(t))}{\partial \varsigma}dB_{1}(t) \cr&&\hspace{-0.7cm}+
 \frac{\partial f(t,x(t),v_{1}(t),v_{2}(t))}{\partial v_{2}}\alpha_{1}\sqrt{2\alpha_{2}}dB_{2}(t)\cr&&\hspace{-0.7cm}=:\Delta_{1}(t)dt+\Delta_{2}(t)dB_{1}(t)+\Delta_{3}(t)dB_{2}(t),
\end{eqnarray}
where $\varsigma$ represents the second argument of the function $\sigma(\cdot,\cdot)$. By Assumptions (A1)-(A3) and
the mean square boundedness of the colored noise $v_{2}(t)$, there exist known positive constants $C_{1}$ and $C_{2}$ such that
\begin{eqnarray}\label{55b}
 \sup_{t\geq 0}\mathbb{E}[|\Delta_{1}(t)|^{2}+|\Delta_{2}(t)|^{2}]\leq C_{1}, \;
\sup_{t\geq 0}|\Delta_{3}(t)|^{2} \leq C_{2}.
\end{eqnarray}
A direct computation shows that the error variable $\eta(t)$ satisfies the following It\^{o}-type stochastic differential equation
\begin{eqnarray} \label{system2.13}
\left\{\begin{array}{l}
d\eta_{1}(t)=r[\eta_{2}(t)-a_{1}\eta_{1}(t)]dt-a_{1}r\kappa(t)dt,\cr
d\eta_{2}(t)=r[\eta_{3}(t)-a_{2}\eta_{1}(t)]dt-a_{2}r\kappa(t)dt, \cr\hspace{1.2cm}\vdots\cr
d\eta_{n}(t)=r[\eta_{n+1}(t)-a_{n}\eta_{1}(t)]dt-a_{n}r\kappa(t)dt,
 \cr
d\eta_{n+1}(t)=-ra_{n+1}\eta_{1}(t)dt-a_{n+1}r\kappa(t)dt+\Delta_{1}(t)dt\cr \hspace{1.8cm}+\Delta_{2}(t)dB_{1}(t)+\Delta_{3}(t)dB_{2}(t).
\end{array}\right.
\end{eqnarray}
Define the Lyapunov function  $V:\mathbb{R}^{n+1}\rightarrow
\mathbb{R}$ by $V(\eta)=\eta Q \eta^{\top}$ for $\eta \in \mathbb{R}^{n+1}$.

Apply It\^{o}'s formula to  $V(\eta(t))$ with respect to
$t$ along system (\ref{system2.13}),
and by
\dref{55b}  and Young's inequality, for any $r\geq r^{*}$, it can be obtained that
\begin{eqnarray}\label{34fd2}
&&\hspace{-0.8cm} \frac{d\mathbb{E}V(\eta(t))}{dt} \cr&&\hspace{-0.8cm} \leq
-r\mathbb{E}\|\eta(t)\|^{2}+2r\lambda_{\max}(Q)\sum^{n+1}_{i=1}|a_{i}|\sum^{[\frac{t}{\tau}]+1}_{k=1}\mathbb{E}(\|\eta(t)\||\kappa(t)|\mathbb{I}_{\Omega_{k}})
\cr&&\hspace{-0.5cm}+\lambda^{2}_{\max}(Q)\mathbb{E}\|\eta(t)\|^{2}+C_{1}+\lambda_{\max}(Q)(C_{1}+C_{2})\cr&&\hspace{-0.8cm}
\leq -\xi_{0}r\mathbb{E}V(\eta(t))+\mu_{0} r\sum^{[\frac{t}{\tau}]+1}_{k=1}\mathbb{E}[\kappa^{2}(t)\mathbb{I}_{\Omega_{k}}]
+C_{1}\cr&&\hspace{-0.5cm}+\lambda_{\max}(Q)(C_{1}+C_{2}),
\end{eqnarray}
where $\mu_{0}>0$ is chosen such that $\xi_{0}:=\frac{1-\zeta}{\lambda_{\max}(Q)}-\frac{\lambda_{\max}(Q)(\sum^{n+1}_{i=1}|a_{i}|)^{2}}{\mu_{0}}>0.$
By the triggering mechanism \dref{lineartriggermechemi}, we have
$\sum^{[\frac{t}{\tau}]+1}_{k=1}\mathbb{E}[\kappa^{2}(t)\mathbb{I}_{\Omega_{k}\setminus\Omega_{k,\tau}}]\leq \frac{\theta^{2}}{r},\; \forall t\geq 0.$
It follows from Assumption (A3) and $\tau=\epsilon r^{-(n+\frac{1}{2})}$  that
\begin{eqnarray}\label{56equationsdd}
&&\hspace{-0.5cm} \sum^{[\frac{t}{\tau}]+1}_{k=1}\mathbb{E}[\kappa^{2}(t)\mathbb{I}_{\Omega_{k,\tau}}]
=r^{2n}\sum^{[\frac{t}{\tau}]+1}_{k=1}\mathbb{E}\left[\left(\int^{t}_{t_{k}}x_{2}(s)ds\right)^{2}\mathbb{I}_{\Omega_{k,\tau}}\right] \cr&& \hspace{-0.5cm}
\leq r^{2n}\tau \int^{t}_{t-\tau}\mathbb{E}x^{2}_{2}(s)ds \leq \frac{M\epsilon^{2}}{r},\; \forall t\geq \tau,
\end{eqnarray}
and it can be easily obtained that $\mathbb{E}[\kappa^{2}(t)\mathbb{I}_{\Omega_{1,\tau}}]\leq \frac{M\epsilon^{2}}{r}$ also holds for $t\in [0,\tau)$ where $\varrho_{t}=0$ a.s. For any $t\in [T,\infty)$, it holds that
$e^{-\xi_{0}rt}\mathbb{E}V(\eta(0))\leq \lambda_{\max}(Q) e^{-\xi_{0}rT}\sum^{n+1}_{i=1}r^{2(n+1-i)}\mathbb{E}|x_{i}(0)-\hat{x}_{i}(0)|^{2} \leq \frac{\Theta_{1}}{r}$,
for some $r$-independent constant $\Theta_{1}:=\sup_{r\geq r^{*}}\lambda_{\max}(Q) e^{-\xi_{0}rT}\sum^{n+1}_{i=1}r^{2n+3-2i}\mathbb{E}|x_{i}(0)-\hat{x}_{i}(0)|^{2}$
 which is finite since $\lim_{r\rightarrow \infty}e^{-\xi_{0}rT}\sum^{n+1}_{i=1}r^{2n+3-2i}=0$. Thus, for any $r\geq r^{*}$ and $t\in [T,\infty)$, we have
\begin{eqnarray*}
&& \hspace{-0.7cm}\mathbb{E}V(\eta(t))\cr&& \hspace{-0.7cm}\leq e^{-\xi_{0}rt}\mathbb{E}V(\eta(0))
+\mu_{0} r\int^{t}_{0}e^{-\xi_{0}r(t-s)}\sum^{[\frac{s}{\tau}]+1}_{k=1}\bigg\{\mathbb{E}[\kappa^{2}(s)\mathbb{I}_{\Omega_{k,\tau}}]\cr&&\hspace{-0.7cm}+\mathbb{E}[\kappa^{2}(s)\mathbb{I}_{\Omega_{k}\setminus\Omega_{k,\tau}}]\bigg\}ds
+\frac{C_{1}+\lambda_{\max}(Q)(C_{1}+C_{2})}{\xi_{0}r}
\leq \frac{\Theta_{2}}{r},
\end{eqnarray*}
where $\Theta_{2}:=\Theta_{1}+\frac{\mu_{0}(\theta^{2}+M\epsilon^{2})}{\xi_{0}}+\frac{C_{1}+\lambda_{\max}(Q)(C_{1}+C_{2})}{\xi_{0}}$.
This further yields that, for all $i=1,\cdots,n+1$,
\begin{eqnarray}\label{514fdf}
&&\hspace{-0.5cm}\mathbb{E}|x_{i}(t)-\hat{x}_{i}(t)|^{2}\leq \frac{1}{r^{2n+2-2i}}\mathbb{E}|\eta_{i}(t)|^{2}
\cr&&\hspace{-0.5cm}\leq \frac{1}{\lambda_{\min}(Q)r^{2n+2-2i}}\mathbb{E}V(\eta(t))\leq \frac{\Theta}{r^{2n+3-2i}},
\end{eqnarray}
uniformly in $t\in [T,\infty)$, where $\Theta:=\frac{\Theta_{2}}{\lambda_{\min}(Q)}$.
In addition, by \dref{514fdf} and Chebyshev's inequality (\cite[p.5]{mao}), it holds that
$P\{|x_{i}(t)-\hat{x}_{i}(t)|\geq m\sqrt{\Theta}(\frac{1}{r})^{\frac{2n+3-2i}{2}}\} \leq \frac{1}{m^{2}}$
uniformly in $t\in [T,\infty)$ and for $m\in \mathbb{Z}^{+}$. From the Borel-Cantelli's lemma (\cite[p.7]{mao}), for almost all $\omega\in \Omega$, there
exists a random variable $m^{*}(\omega)$ such that whenever $m\geq m^{*}(\omega)$, we have
\begin{eqnarray}
|x_{i}(t)-\hat{x}_{i}(t)|\leq \frac{ m\sqrt{\Theta}}{r^{n+\frac{3}{2}-i}}, \; t\in [T,\infty).
\end{eqnarray}
Set $\Theta_{\omega}=m^{*}(\omega)\sqrt{\Theta}> 0$ which is an $r$-independent random variable.
This completes the proof.

\begin{center}
{\bf  APPENDIX B: Proof of Theorem \ref{theorem3.2}}
\end{center}
The following $\eta(t)$, $\varrho_{t}$ and $\kappa(t)$ are defined as those in
 Appendix A.
Similarly, for almost every sample path, there
are at most $[\frac{t}{\tau^{*}}]+1$ execution times before $t$. Define
 $\Omega^{*}_{k}=\{\varrho_{t}=t_{k}\}, \; \Omega^{*}_{k,\tau^{*}}=\{\varrho_{t}=t_{k}\;\mbox{and}\; t\leq t_{k}+\tau^{*}\},k=1,\cdots,[\frac{t}{\tau^{*}}]+1$,
and then $\Omega=\bigcup^{[\frac{t}{\tau^{*}}]+1}_{k=1} \Omega^{*}_{k}$
for each $t\in[0,\infty)$. By simple analysis, it can be easily obtained that
$|\langle \theta_{1}\rangle^{\vartheta}-\langle \theta_{2}\rangle^{\vartheta}|\leq 2^{1-\vartheta}|\theta_{1}-\theta_{2}|^{\vartheta}$
for all $\theta_{1},\theta_{2}\in \mathbb{R}$ and $\vartheta\in (0,1)$.
For $i=1,\cdots,n+1$, we set $\delta_{i}(t)=\langle \eta_{1}(t)\rangle^{i\nu-(i-1)}-\langle\eta_{1}(t)+\kappa(t)\rangle^{i\nu-(i-1)}$.
It then follows that $|\delta_{i}(t)|\leq 2^{i(1-\nu)}|\kappa(t)|^{i\nu-(i-1)},\;\forall t\geq 0$.

A direct computation shows that the error variable $\eta(t)$ satisfies the following It\^{o}-type stochastic differential equation
\begin{equation}\label{errsystes}
\left\{\begin{array}{l}
d\eta_{1}(t)=[r(\eta_{2}(t)-a_{1}\langle\eta_{1}(t)\rangle^{\nu})+a_{1}r\delta_{1}(t)]dt,\cr
d\eta_{2}(t)=[r(\eta_{3}(t)-a_{2}\langle\eta_{1}(t)\rangle^{2\nu-1})+a_{2}r\delta_{2}(t)]dt, \cr \hspace{1.2cm}\vdots\cr
d\eta_{n}(t)=[r(\eta_{n+1}(t)-a_{n}\langle\eta_{1}(t)\rangle^{n\nu-(n-1)})+a_{n}r\delta_{n}(t)]dt,
 \cr
d\eta_{n+1}(t)=-ra_{n+1}\langle\eta_{1}(t)\rangle^{(n+1)\nu-n}dt+a_{n+1}r\delta_{n+1}(t)dt\cr\hspace{1.8cm}+\Delta_{1}(t)dt+\Delta_{2}(t)dB_{1}(t)+\Delta_{3}(t)dB_{2}(t),
\end{array}\right.
\end{equation}
where $\Delta_{i}(t)\;(i=1,2,3)$ are defined as those in \dref{53fd} satisfying (\ref{55b}).
As a consequence of Lemmas \ref{lemma2.2}-\ref{lemma2.3}, system $\dot{\eta}(t)=\Phi(\eta(t))$
is globally finite-time stable, where $\Phi(\cdot)$ is defined in \dref{functionchi}.
Using Lemma \ref{fdf}, for any $\mu$ satisfying $\disp \max\{1,2w_{n+1}\} <\mu < d_{0}+2w_{n+1}$ with $d_{0}:=1-\nu>0$,
 we can conclude that there exists
a positive definite, radially unbounded function $V\in C^{1}(\mathbb{R}^{n+1};\mathbb{R})\cap C^{\infty}(\mathbb{R}^{n+1}\setminus\{0_{n+1}\};\mathbb{R})\ $
 such that $V$ is homogeneous of degree
$\mu$ with respect  to weights $\{w_{l}=(l-1)\nu-(l-2)\}_{l=1}^{n+1}$,  and
the Lie derivative of $V(\eta)$ along the vector field $\Phi$:
\begin{eqnarray}\label{424s}
&&\hspace{-0.8cm}L_{\Phi}V(\eta):=\sum^{n+1}_{i=1}\frac{\partial V(\eta)}{\partial \eta_{i}}\Phi_{i}(\eta)=\sum_{i=1}^{n}\dfrac{\partial V(\eta)}{\partial
\eta_{i}}
\left(\eta_{i+1}\right.\cr&&\hspace{-0.8cm}\left.-a_{i}\langle\eta_{1}\rangle^{i\nu-(i-1)}\right)
-\dfrac{\partial V(\eta)}{\partial \eta_{n+1}}a_{n+1}
\langle\eta_{1}\rangle^{(n+1)\nu-n}
\end{eqnarray}
is negative definite. By the definition of  homogeneity in Definition 2 and the fact that
$V(\eta)$ is homogeneous of degree $\mu$ with respect  to weights $\{w_{l}\}_{l=1}^{n+1}$,
it can be easily obtained that $L_{\Phi}V(\eta)$, $\dfrac{\partial V(\eta_{1},\cdots,\eta_{n+1})}{\partial
\eta_{i}}$,  $\dfrac{\partial^{2} V(\eta_{1},\cdots,\eta_{n+1})}{\partial
\eta^{2}_{n+1}}$ and $|\eta_{i}|$ are homogeneous
of  degree $\mu-d_{0}$,  $\mu-w_{i}$, $\mu-2w_{n+1}$ and $w_{i}$ with respect  to weights
$\{w_{l}\}_{l=1}^{n+1}$, respectively. The above analysis together with Lemma \ref{lemma2.1} yields the following inequalities:
\begin{eqnarray}\label{LfV}
&&\hspace{-0.9cm} L_{\Phi}V(\eta)\le -c_{1}(V(\eta))^{\frac{\mu-d_{0}}{\mu}},
|\dfrac{\partial V(\eta)}{\partial \eta_{i}}| \le c_{2i}
(V(\eta))^{\frac{\mu-w_{i}}{\mu}}, \cr&& \hspace{-0.9cm}
|\eta_{i}| \le c_{3i}(V(\eta))^{\frac{w_{i}}{\mu}},\; \forall \eta\in \mathbb{R}^{n+1},
 \cr&& \hspace{-0.9cm}|\dfrac{\partial^{2} V(\eta)}{\partial \eta^{2}_{n+1}}| \le c_{4}
(V(\eta))^{\frac{\mu-2w_{n+1}}{\mu}},  \forall \eta\in \mathbb{R}^{n+1}\setminus\{0_{n+1}\},
\end{eqnarray}
for some positive constants $c_{1},c_{2i},c_{3i},c_{4}$. For any fixed $t\in[0,\infty)$, the transient convergence
result (\ref{3fdfd})
holds directly for $\omega \in \Omega_{t}:=\{\|\eta(t)\|=0\}$, so we only need to analyze $\mathcal{L}V(\eta(t))$ for $\omega \in \Omega\setminus\Omega_{t}$.
It follows from (\ref{LfV}) that
$|\dfrac{\partial^{2} V(\eta(t))}{\partial \eta^{2}_{n+1}}| \le c_{4}
(V(\eta(t)))^{\frac{\mu-2w_{n+1}}{\mu}}$,
for  $\omega \in \Omega\setminus\Omega_{t}$. For convenience of the symbol, we proceed the proof by assuming that this inequality holds for almost all $\omega \in \Omega$ without loss of generality.
Since $\nu>1-\frac{1}{n+1}$,
$\frac{\mu-w_{n+1}}{\mu}<\frac{\mu-d_{0}}{\mu}$.
These together with the inequalities used repeatedly hereinbelow: $ab\leq \frac{1}{p_{0}}a^{p_{0}}+\frac{1}{q_{0}}b^{q_{0}}$ and $(\sum^{m}_{i=1}a_{i})^{\vartheta}\leq m^{\vartheta-1}\sum^{m}_{i=1}a^{\vartheta}_{i}$ with $\vartheta,p_{0},q_{0}>1$, $\frac{1}{p_{0}}+\frac{1}{q_{0}}=1$, $m\in \mathbb{Z}^{+}$, $a,b,a_{i}\geq 0$, yield that
\begin{eqnarray*}
&&\hspace{-0.6cm}  \mathcal{L}V(\eta(t))\cr&&\hspace{-0.6cm}   =
rL_{\Phi}V(\eta(t))+r\sum^{n+1}_{i=1}\dfrac{\partial V(\eta(t))}{\partial \eta_{i}}a_{i}\delta_{i}(t)
+\dfrac{\partial V(\eta(t))}{\partial \eta_{n+1}}\Delta_{1}(t)\cr&&\hspace{-0.6cm}+\frac{1}{2}\dfrac{\partial^{2} V(\eta(t))}{\partial \eta^{2}_{n+1}}(\Delta^{2}_{2}(t)+\Delta^{2}_{3}(t))
\cr&&\hspace{-0.6cm} \leq -c_{1}r(V(\eta(t)))^{\frac{\mu-d_{0}}{\mu}}+\sum^{n+1}_{i=1}(a_{i}c_{2i}2^{(1-\nu)i})^{\frac{\mu-d_{0}}{\mu-w_{i}}}\frac{\mu-w_{i}}{\mu-d_{0}}\cdot\cr&&\hspace{-0.6cm} (V(\eta(t)))^{\frac{\mu-d_{0}}{\mu}}
+\sum^{n+1}_{i=1}\sum^{[\frac{t}{\tau^{*}}]+1}_{k=1}r^{\frac{\mu-d_{0}}{w_{i}-d_{0}}}\frac{w_{i}-d_{0}}{\mu-d_{0}}\{|\kappa(t)|^{\frac{w_{i+1}(\mu-d_{0})}{w_{i}-d_{0}}}\cr&&\hspace{-0.6cm}\mathbb{I}_{\Omega^{*}_{k}}\}
+\frac{c_{2(n+1)}(\mu-w_{n+1})}{\mu-d_{0}}(V(\eta(t)))^{\frac{\mu-d_{0}}{\mu}}+c_{2(n+1)}\cdot\cr&&\hspace{-0.6cm}\frac{(w_{n+1}-d_{0})}{\mu-d_{0}}|\Delta_{1}(t)|^{\frac{\mu-d_{0}}{w_{n+1}-d_{0}}}
+\frac{c_{4}(\mu-2w_{n+1})}{2(\mu-d_{0})}(V(\eta(t)))^{\frac{\mu-d_{0}}{\mu}}\cr&&\hspace{-0.6cm}+\frac{c_{4}(2w_{n+1}-d_{0})}{2(\mu-d_{0})}
2^{\frac{\mu-2w_{n+1}}{2w_{n+1}-d_{0}}}[|\Delta_{2}(t)|^{\frac{2(\mu-d_{0})}{2w_{n+1}-d_{0}}}+C^{\frac{\mu-d_{0}}{2w_{n+1}-d_{0}}}_{2}],
\end{eqnarray*}
where $w_{n+2}:=(n+1)\nu-n$. Choose $r_{0}>0$ such that $\frac{c_{1}r_{0}}{2}-\sum^{n+1}_{i=1}(a_{i}c_{2i}2^{(1-\nu)i})^{\frac{\mu-d_{0}}{\mu-w_{i}}}\frac{\mu-w_{i}}{\mu-d_{0}}-\frac{c_{2(n+1)}(\mu-w_{n+1})}{\mu-d_{0}}-\frac{c_{4}(\mu-2w_{n+1})}{2(\mu-d_{0})}>0$.
Then, for any $r\geq r^{*}:=\max\{1, r_{0}\}$ and any $t\geq0$, it holds that
\begin{eqnarray}\label{524equations}
\mathcal{L}V(\eta(t))  \leq -\frac{c_{1}r}{2}(V(\eta(t)))^{\frac{\mu-d_{0}}{\mu}}+\Gamma_{1}+\sum^{4}_{i=2}\Gamma_{i}(t),
\end{eqnarray}
where we set
\begin{eqnarray}
&&\hspace{-0.6cm} \Gamma_{1}=\frac{c_{4}(2w_{n+1}-d_{0})}{2(\mu-d_{0})}
2^{\frac{\mu-2w_{n+1}}{2w_{n+1}-d_{0}}}C^{\frac{\mu-d_{0}}{2w_{n+1}-d_{0}}}_{2}, \cr&& \hspace{-0.6cm}
\Gamma_{2}(t)=\sum^{n+1}_{i=1}\sum^{[\frac{t}{\tau^{*}}]+1}_{k=1}r^{\frac{\mu-d_{0}}{w_{i}-d_{0}}}\frac{w_{i}-d_{0}}{\mu-d_{0}}\{|\kappa(t)|^{\frac{w_{i+1}(\mu-d_{0})}{w_{i}-d_{0}}}\mathbb{I}_{\Omega^{*}_{k}}\}, \cr&& \hspace{-0.6cm}
\Gamma_{3}(t)=\frac{c_{2(n+1)}(w_{n+1}-d_{0})}{\mu-d_{0}}|\Delta_{1}(t)|^{\frac{\mu-d_{0}}{w_{n+1}-d_{0}}}, \cr&& \hspace{-0.6cm}
\Gamma_{4}(t)=\frac{c_{4}(2w_{n+1}-d_{0})}{2(\mu-d_{0})}
2^{\frac{\mu-2w_{n+1}}{2w_{n+1}-d_{0}}}|\Delta_{2}(t)|^{\frac{2(\mu-d_{0})}{2w_{n+1}-d_{0}}}.
\end{eqnarray}
Define the stopping time as follows:
\begin{eqnarray*}
\hspace{-0.1cm}\varpi=\inf\{t\geq 0: V(\eta(t))\leq [\frac{4}{c_{1}r}(\Gamma_{1}+\sum^{4}_{i=2}\Gamma_{i}(t))]^{\frac{\mu}{\mu-d_{0}}}\}.
\end{eqnarray*}
If $V(\eta(t))\geq [\frac{4}{c_{1}r}(\Gamma_{1}+\sum^{4}_{i=2}\Gamma_{i}(t))]^{\frac{\mu}{\mu-d_{0}}}$ for some $t>\varpi$, then
$\frac{d\mathbb{E}V(\eta(t))}{dt}=\mathbb{E}\mathcal{L}V(\eta(t))\leq 0$. Thus, $\mathbb{E}V(\eta(t))\leq \mathbb{E}[\frac{4}{c_{1}r}(\Gamma_{1}+\sum^{4}_{i=2}\Gamma_{i}(t))]^{\frac{\mu}{\mu-d_{0}}}$
for all $t\in [\varpi,\infty)$.  Therefore, for all $i=1,\cdots,n+1$ and $t\in [\varpi,\infty)$, it follows from (\ref{LfV}) and H\"{o}lder's inequality that
\begin{eqnarray}\label{515equation}
&&\hspace{-0.7cm} \mathbb{E}|x_{i}(t)-\hat{x}_{i}(t)|=\frac{1}{r^{n+1-i}}\mathbb{E}|\eta_{i}(t)|
\leq \frac{c_{3i}}{r^{n+1-i}}\mathbb{E}(V(\eta(t)))^{\frac{w_{i}}{\mu}}
\cr&& \hspace{-0.7cm} \leq \frac{c_{3i}}{r^{n+1-i}}(\mathbb{E}(V(\eta(t)))^{\frac{w_{i}}{\mu}}
\leq (\frac{1}{r})^{n+1+\frac{w_{i}}{\mu-d_{0}}-i}c_{3i}(\frac{4}{c_{1}})^{\frac{w_{i}}{\mu-d_{0}}}\cr&&\hspace{-0.3cm} \times4^{\frac{d_{0}w_{i}}{(\mu-d_{0})\mu}}\left[\Gamma_{1}^{\frac{\mu}{\mu-d_{0}}}
+\sum^{4}_{i=2}\mathbb{E}[\Gamma_{i}(t)]^{\frac{\mu}{\mu-d_{0}}}\right]^{\frac{w_{i}}{\mu}}.
\end{eqnarray}
By $\tau^{*}=\epsilon^{*}r^{-(n+\frac{1}{w_{n+1}})}$ and Assumption (A4), we have
\begin{eqnarray}\label{528equation}
&&\hspace{-0.5cm} \mathbb{E}\{\sum^{n+1}_{i=1}\sum^{[\frac{t}{\tau^{*}}]+1}_{k=1}r^{\frac{\mu-d_{0}}{w_{i}-d_{0}}}\frac{w_{i}-d_{0}}{\mu-d_{0}}[|\kappa(t)|^{\frac{w_{i+1}(\mu-d_{0})}{w_{i}-d_{0}}}\mathbb{I}_{\Omega^{*}_{k,\tau}}]\}^{\frac{\mu}{\mu-d_{0}}}\cr &&\hspace{-0.5cm}
\leq (n+1)^{\frac{d_{0}}{\mu-d_{0}}}\sum^{n+1}_{i=1}\sum^{[\frac{t}{\tau^{*}}]+1}_{k=1}r^{(1+nw_{i+1})\frac{\mu}{w_{i}-d_{0}}}(\frac{w_{i}-d_{0}}{\mu-d_{0}})^{\frac{\mu}{\mu-d_{0}}}\cr &&\hspace{-0.1cm}\times\mathbb{E}((\int^{t}_{t_{k}}|x_{2}(s)|ds)^{\frac{w_{i+1}\mu}{w_{i}-d_{0}}}\mathbb{I}_{\Omega^{*}_{k,\tau}}) \cr&&\hspace{-0.5cm}
\leq (n+1)^{\frac{d_{0}}{\mu-d_{0}}}\sum^{n+1}_{i=1}r^{(1+nw_{i+1})\frac{\mu}{w_{i}-d_{0}}}(\frac{w_{i}-d_{0}}{\mu-d_{0}})^{\frac{\mu}{\mu-d_{0}}}
\cr &&\hspace{-0.1cm}\times(\int^{t}_{t-\tau^{*}}\mathbb{E}|x_{2}(s)|^{p}ds)^{\frac{w_{i+1}\mu}{p(w_{i}-d_{0})}}
 \tau^{*\frac{w_{i+1}\mu}{q(w_{i}-d_{0})}} \cr&&\hspace{-0.5cm}\leq (n+1)^{\frac{d_{0}}{\mu-d_{0}}}\sum^{n+1}_{i=1} r^{(1+nw_{i+1})\frac{\mu}{w_{i}-d_{0}}}(\frac{w_{i}-d_{0}}{\mu-d_{0}})^{\frac{\mu}{\mu-d_{0}}}\cr &&\hspace{-0.2cm}\times N^{\frac{w_{i+1}\mu}{p(w_{i}-d_{0})}}\tau^{*\frac{w_{i+1}\mu}{w_{i}-d_{0}}}
\leq M_{1}, \; \forall t\geq \tau^{*},
\end{eqnarray}
where $M_{1}=(n+1)^{\frac{d_{0}}{\mu-d_{0}}}\sum^{n+1}_{i=1}(\frac{w_{i}-d_{0}}{\mu-d_{0}})^{\frac{\mu}{\mu-d_{0}}}N^{\frac{w_{i+1}\mu}{p(w_{i}-d_{0})}}\epsilon^{*\frac{w_{i+1}\mu}{w_{i}-d_{0}}}$
and $q$ satisfies $\frac{1}{p}+\frac{1}{q}=1$, and it follows easily that the inequality \dref{528equation} also holds for $t\in [0,\tau^{*})$ where $\varrho_{t}=0$ a.s.
By the event-triggering mechanism (\ref{nonlineartrigger}), for all $t\geq 0$, it holds that
\begin{eqnarray}\label{529equations}
&&\hspace{-0.5cm}\mathbb{E}\{\sum^{n+1}_{i=1}\sum^{[\frac{t}{\tau^{*}}]+1}_{k=1}r^{\frac{\mu-d_{0}}{w_{i}-d_{0}}}\frac{w_{i}-d_{0}}{\mu-d_{0}}[|\kappa(t)|^{\frac{w_{i+1}(\mu-d_{0})}{w_{i}-d_{0}}}\mathbb{I}_{\Omega^{*}_{k}\setminus\Omega^{*}_{k,\tau}}]\}^{\frac{\mu}{\mu-d_{0}}}
\cr&&\hspace{-0.5cm} \disp \leq M_{2},
\end{eqnarray}
 where $M_{2}:= (n+1)^{\frac{d_{0}}{\mu-d_{0}}}\sum^{n+1}_{i=1}\theta^{*\frac{w_{i+1}\mu}{w_{i}-d_{0}}}(\frac{w_{i}-d_{0}}{\mu-d_{0}})^{\frac{\mu}{\mu-d_{0}}}.$
Thus, it follows from (\ref{528equation}) and (\ref{529equations}) that
\begin{eqnarray}
\mathbb{E}[\Gamma_{2}(t)]^{\frac{\mu}{\mu-d_{0}}}\leq 2^{\frac{d_{0}}{\mu-d_{0}}}(M_{1}+M_{2}),\; \forall t\geq 0.
\end{eqnarray}
 By $\nu>1-\frac{p-2}{(p-2)n+p+1}$ and $\mu<d_{0}+2w_{n+1}$, it follows that $\frac{\mu}{w_{n+1}-d_{0}}<p$.
Thus, it follows from Assumptions (A1), (A2) and (A4) that for all $t\geq 0$,
\begin{eqnarray*}
&&\hspace{-0.6cm}\mathbb{E}[\Gamma_{3}(t)]^{\frac{\mu}{\mu-d_{0}}}\leq [\frac{c_{2(n+1)}(w_{n+1}-d_{0})}{\mu-d_{0}}]^{\frac{\mu}{\mu-d_{0}}}\mathbb{E}|\Delta_{1}(t)|^{p}\leq M_{3}
\end{eqnarray*}
for some $M_{3}>0$. Similarly,
we also have $\frac{2\mu}{2w_{n+1}-d_{0}}<p$. Then, it follows from Assumptions (A1), (A2) and (A4) that for all $t\geq 0$,
\begin{eqnarray*}
&&\hspace{-0.6cm}\mathbb{E}[\Gamma_{4}(t)]^{\frac{\mu}{\mu-d_{0}}}\leq [\frac{c_{4}(2w_{n+1}-d_{0})}{2(\mu-d_{0})}
2^{\frac{\mu-2w_{n+1}}{2w_{n+1}-d_{0}}}]^{\frac{\mu}{\mu-d_{0}}}\mathbb{E}|\Delta_{2}(t)|^{p}\cr&&\hspace{1.4cm}\leq M_{4}
\end{eqnarray*}
for some $M_{4}>0$. These together with \dref{515equation}, yield that for all $i=1,\cdots,n+1$ and $t\in [\varpi,\infty)$,
\begin{eqnarray}
\mathbb{E}|x_{i}(t)-\hat{x}_{i}(t)|\leq \Xi_{i}\cdot(\frac{1}{r})^{n+1+\frac{w_{i}}{\mu-d_{0}}-i},
\end{eqnarray}
where we set $\Xi_{i}= c_{3i}(\frac{4}{c_{1}})^{\frac{w_{i}}{\mu-d_{0}}}4^{\frac{d_{0}w_{i}}{(\mu-d_{0})\mu}}
[\Gamma_{1}^{\frac{\mu}{\mu-d_{0}}}+2^{\frac{d_{0}}{\mu-d_{0}}}(M_{1}+M_{2})+M_{3}+M_{4}]^{\frac{w_{i}}{\mu}}$.
In addition, by Chebyshev's inequality (\cite[p.5]{mao}), it holds that
$P\{|x_{i}(t)-\hat{x}_{i}(t)|\geq m^{2}\Xi_{i}\cdot(\frac{1}{r})^{n+1+\frac{w_{i}}{\mu-d_{0}}-i}\} \leq \frac{1}{m^{2}}$
for all $m\in \mathbb{Z}^{+}$, $i=1,\cdots,n+1$ and $t\in [\varpi,\infty)$. From the Borel-Cantelli's lemma (\cite[p.7]{mao}), for almost all $\omega\in \Omega$
 and $t\in [\varpi,\infty)$, there
exists a random variable $m_{0}(\omega)$ such that whenever $m\geq m_{0}(\omega)$, we have
\begin{eqnarray}
|x_{i}(t)-\hat{x}_{i}(t)|\leq m^{2}\Xi_{i}\cdot(\frac{1}{r})^{n+1+\frac{w_{i}}{\mu-d_{0}}-i}.
\end{eqnarray}
Then \dref{3fdfd} is  obtained by letting $\Xi_{i,\omega}=m^{2}_{0}(\omega)\Xi_{i}>0$ which is a random variable independent of $r$.
Next we prove $\mathbb{E}\varpi\leq T_{r}$ for some $r$-dependent positive constant $T_{r}$. Define a series of stopping times as follows:
\begin{eqnarray*}
&&\hspace{-0.8cm}\varpi_{m}=\inf\{t\geq 0: V(\eta(t))\leq [\frac{4}{c_{1}r}(\Gamma_{1}+\sum^{4}_{i=2}\Gamma_{i}(t))]^{\frac{\mu}{\mu-d_{0}}}+\frac{1}{m}\cr&& \hspace{-0.8cm}  \mbox{or}\;  V(\eta(t))\geq [\frac{4}{c_{1}r}(\Gamma_{1}+\sum^{4}_{i=2}\Gamma_{i}(t))]^{\frac{\mu}{\mu-d_{0}}}+m\},\; m\in \mathbb{Z}^{+}.
\end{eqnarray*}
Set $\Upsilon(V)=\frac{4}{c_{1}r}\int^{V}_{0}\vartheta^{-\frac{\mu-d_{0}}{\mu}}d\vartheta, \; V\in [0,\infty)$.
Thus, for all $t\in [0,\varpi_{m})$, it holds that
 \begin{eqnarray}
&&\hspace{-0.6cm}\mathcal{L}\Upsilon(V(\eta(t)))\cr&&\hspace{-0.6cm}=\frac{4}{c_{1}r}(V(\eta(t)))^{-\frac{\mu-d_{0}}{\mu}}\mathcal{L}V(\eta(t))
-\frac{2(\mu-d_{0})}{c_{1}r\mu}(V(\eta(t)))^{-\frac{2\mu-d_{0}}{\mu}}\cr&&\hspace{-0.3cm}\times[(\frac{\partial V(\eta(t))}{\partial \eta_{n+1}}\Delta_{2}(t))^{2}+(\frac{\partial V(\eta(t))}{\partial \eta_{n+1}}\Delta_{3}(t))^{2}]\leq -1.
\end{eqnarray}
Set $t_{m}=m\wedge \varpi_{m}$ for $m\in \mathbb{Z}^{+}$. Then
 \begin{eqnarray}
&&\mathbb{E}[\Upsilon(V(\eta(t_{m})))]-\mathbb{E}[\Upsilon(V(\eta(0)))]
\cr&&=\mathbb{E}\int^{t_{m}}_{0}\mathcal{L}\Upsilon (V(\eta(s)))ds
\leq -\mathbb{E}[t_{m}]
\end{eqnarray}
which means that $\mathbb{E}[t_{m}]\leq \mathbb{E}[\Upsilon(V(\eta(0)))]$.
Passing to the limit as $m\rightarrow\infty$ and using the
Fatou's lemma, we obtain $\mathbb{E}\varpi\leq \frac{4\mu}{c_{1}d_{0}r}\mathbb{E}[V(\eta(0))]^{\frac{d_{0}}{\mu}}=:T_{r}<\infty$.
This completes the proof.
%
%

\ifCLASSOPTIONcaptionsoff
  \newpage
\fi



%

%






\end{document}